\documentclass[12pt,a4paper]{amsart}
\usepackage{tgtermes}

\usepackage[margin=1in,a4paper]{geometry}
\usepackage[foot]{amsaddr}
\usepackage{enumitem}
\usepackage{lineno}
\usepackage{hyperref}
\hypersetup{
    colorlinks=true,
    citecolor=blue,
    linkcolor=blue,
    filecolor=blue,
    urlcolor=blue,
    pdftitle={A proof of the cycle double cover conjecture by OpenAI: An exposition},
    pdfauthor={Sang-il Oum}
}
\usepackage{keytheorems}
\usepackage{zref-clever}
\zcsetup{cap=true}
\newkeytheorem{theorem}[name=Theorem,refname={Theorem,Theorems}, Refname={Theorem,Theorems}]
\newkeytheorem{lemma,proposition,corollary,problem}[sibling=theorem]
\newkeytheorem{conjecture}[Refname={Conjecture,Conjectures},sibling=theorem]
\newcommand\abs[1]{\left\lvert #1\right\rvert}

\begin{document}
\title{A proof of the cycle double cover conjecture by OpenAI: An~exposition}
\author{Sang-il Oum}
\address{Discrete Mathematics Group, Institute for Basic Science (IBS), Daejeon, South Korea}
\email{sangil@ibs.re.kr}
\thanks{Supported by the Institute for Basic Science (IBS-R029-C1).}
\date\today
\begin{abstract}
    The cycle double cover conjecture states that every bridgeless graph 
    has a list of cycles such that every edge is in exactly two of them. 
    In July 2026, OpenAI announced a proof. 
    This exposition presents the proof with slight modifications
    intended to make it more accessible. 
\end{abstract}
\maketitle
\section{Introduction}

A graph is \emph{Eulerian} if every vertex has even degree.
A \emph{cycle} of a graph is a $2$-regular connected subgraph. 

A \emph{cycle double cover (CDC)} is a list of cycles such that 
every edge is in exactly two of them.
Since every Eulerian subgraph can be decomposed into pairwise edge-disjoint cycles, for the existence of a cycle double cover, it is equivalent to define a cycle double cover as a list of Eulerian subgraphs such that every edge is in exactly two of them.

A \emph{bridge} of a graph is an edge whose deletion increases the number of components.
Observe that a bridge is not in any cycle. Thus to have a cycle double cover, it is necessary to be bridgeless.

This note is to discuss a proof of the cycle double cover conjecture, announced by OpenAI~\cite{openai2026}.

\begin{theorem}[store=cdc,label={Cycle Double Cover Theorem}]\label{thm:cdc}
    Every bridgeless graph has a cycle double cover.
\end{theorem}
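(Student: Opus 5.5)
The plan is to reduce to a minimal counterexample and then build the double cover from a suitable decomposition. Let $G$ be a counterexample with $\abs{E(G)}$ minimum. Standard reductions then show that $G$ may be assumed:
\begin{itemize}
\item $2$-connected, since cycle double covers of blocks combine;
\item $3$-edge-connected, by contracting one side of a $2$-edge-cut and reinserting;
\item cubic, by splitting vertices of degree at least $4$ while preserving bridgelessness;
\item free of short cycles, by contracting a cycle and applying minimality.
\end{itemize}
These reductions are routine. The substance is to construct the double cover directly in the cubic, cyclically well-connected case.

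For the construction, I would aim at a strengthened, inductive statement rather than the bare one. Concretely, I would prove that for every bridgeless cubic $G$ and every cycle $C$ (or every vertex $v$ with a prescribed pairing of the three edges at $v$), there is a cycle double cover containing $C$, or respecting the prescribed transitions at $v$. This is in the spirit of the strong CDC or the Goddyn-type ``prescribed cycle'' versions.

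The inductive step would split a vertex or an edge. We remove an edge $e=uv$ and suppress $u$ and $v$, which gives a smaller cubic graph $G'$. This is bridgeless provided $e$ lies in no $3$-edge-cut other than the trivial ones. We then apply the strengthened hypothesis to $G'$, prescribing behaviour at the two suppressed edges so that two cycles of the cover of $G'$ can be rerouted through $e$. The prescription is chosen so that the rerouting adds $e$ exactly twice and keeps every other edge covered exactly twice. Nontrivial $3$-edge-cuts are handled separately: split along the cut into two smaller cubic graphs, obtain covers of each with compatible behaviour on the three cut edges, and glue.

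An alternative route I would keep in reserve is the following. Show that $G$ has a $6$-flow, or use Seymour's $6$-flow theorem to get a nowhere-zero $\mathbb{Z}_2^3$-type structure up to a small defect. Then repair the defect locally. A nowhere-zero $4$-flow immediately gives a double cover by $3$ Eulerian subgraphs, but this fails for snarks, so the repair would be the key step.

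I expect the main obstacle to be making the strengthened hypothesis strong enough to survive the gluing at $3$-edge-cuts and the rerouting at the removed edge, yet weak enough to still be true. Naive versions such as ``every cycle extends to a CDC'' are exactly the open strong CDC conjecture, so the induction would have to carry a carefully chosen weaker invariant. What I would try first is to prescribe only local transitions at a single vertex, since this is preserved under the $3$-cut gluing: the three cut edges behave like the edges at a vertex. I would then check whether edge removal can always be arranged to need prescriptions at only one vertex of $G'$.
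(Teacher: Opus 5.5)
Your reduction to cubic $3$-edge-connected graphs is essentially the paper's (minimal counterexample, $2$-edge-cuts, Fleischner's splitting lemma). But the cubic case is the entire content of the theorem, and you leave it as a search strategy with an obstacle you yourself flag as unresolved. So there is no proof.

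Moreover, the invariant you would try first carries no information. In any cycle double cover of a cubic graph, let $x,y,z$ count the cycle passages through a vertex using the pairs $\{e,f\},\{e,g\},\{f,g\}$ of its edges. Then $x+y=x+z=y+z=2$, so $x=y=z=1$. Transitions at a cubic vertex are therefore forced, and prescribing them is vacuous. For the same reason, gluing across a nontrivial $3$-edge-cut is automatic and needs no strengthened hypothesis.

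The real difficulty sits in your edge-removal step. The rerouting works when one cycle of the cover of $G'$ contains both suppressed edges $e_u$ and $e_v$: you split that cycle at $u$ and $v$ into two cycles through $e$. Neither the bare induction hypothesis nor any invariant you name supplies such a cover, let alone one that reproduces itself under the reduction. The fallback via Seymour's $6$-flow and a ``local repair'' is not specified at all. Eliminating short cycles by contraction is also not routine beyond digons and triangles, though nothing needs it.

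The missing idea is that there is no defect to repair, and the conversion is global linear algebra over $\mathbb{F}_2$. Jaeger's theorem gives a genuine nowhere-zero $\mathbb{F}_2^3$-flow $\phi$ on every $3$-edge-connected graph. (Double every edge, take three edge-disjoint spanning trees by Tutte--Nash-Williams, and for each tree take an even edge set containing its complement.) The paper then seeks $t_v\in\mathbb{F}_2^3$ with $t_u+t_v\in d_e+\langle\phi(e)\rangle$ for every edge $e=uv$. Here $d_e=\phi(f_u)+\phi(f_v)$ for other edges $f_u\in\delta(u)$, $f_v\in\delta(v)$, and the coset does not depend on these choices.

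Given such $t_v$, the set $P_e=t_u+\phi(f_u)+\langle\phi(e)\rangle$ is a well-defined two-element set. At a vertex with edges $e,f,g$ one has $P_e=\{t_v+\phi(f),t_v+\phi(g)\}$ and its two analogues. So each $s\in\mathbb{F}_2^3$ lies in an even number of them, and the eight sets $\{e: s\in P_e\}$ are Eulerian and cover every edge exactly twice.

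Solvability follows from $\mathcal{C}(A)=\ker(A^{\mathsf T})^\perp$. Take any family $(h_e)$ with $h_e\cdot\phi(e)=0$ and $\sum_{e\in\delta(v)}h_e=0$. The contribution of each vertex to $\sum_e h_e\cdot d_e$ equals the parity of the number of nonzero $h_e$ at that vertex. The total therefore counts each edge with $h_e\neq 0$ twice and vanishes.

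Your observation that a $4$-flow would give a $3$-cycle double cover but snarks lack one is the right instinct. The point is that an $8$-flow, which always exists, suffices once it is lifted in this way.
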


According to Zhang~\cite{Zhang2012,Zhang2016a}, 
this conjecture was independently stated by Seymour~\cite{Seymour1979c} for all bridgeless graphs  
and Szekeres~\cite{Szekeres1973} for all bridgeless cubic graphs.
An equivalent version was stated by Itai and Rodeh~\cite{IR1978}.

One of the motivations to study the cycle double cover conjecture was the following stronger conjecture on embeddings of graphs. 
An \emph{embedding} of a graph on a surface is a drawing of $G$ on the surface such that no two edges cross. An embedding is called a \emph{$2$-cell embedding} if every face is homeomorphic to an open disk. 
A \emph{strong} embedding of a graph $G$ on a surface $\Sigma$ is a $2$-cell embedding such that every face boundary is a cycle of~$G$.
\begin{conjecture}[Strong Embedding Conjecture]\label{conj:strong}
    Every $2$-connected graph has a strong embedding on a surface.
\end{conjecture}

\zcref{conj:strong} implies \zcref{thm:cdc}, just by first reducing to $2$-connected graphs and taking all face boundaries.
\zcref{conj:strong} remains open for general graphs
and 
\zcref{thm:cdc} proves \zcref{conj:strong} for cubic graphs, see \cite{Jaeger1985a}.
For more about the cycle double cover conjecture, survey papers and books were 
written by Jaeger~\cite{Jaeger1985a}, Zhang~\cite{Zhang2012,Zhang2016a,Zhang1997}, and Chan~\cite{Chan2009}.

The proof presented in this note is due to GPT 5.6 announced by OpenAI \cite{openai2026} in July 2026.
The proof was modified mainly in two parts.
\begin{itemize}
    \item The choice of two values of $\mathbb{F}_2^3$ at each edge is now symmetric. We do not choose a particular edge at each vertex.
    \item The linear algebra proof is done without using dual vector spaces and is based on an elementary fact that the column space of a matrix 
    is the orthogonal complement of its left null space.
\end{itemize}

Our goal is to present the whole proof in mostly self-contained manner,
which would be suitable to teach the proof to advanced undergraduate students.
We will only assume two theorems: Fleischner's splitting lemma and a theorem on packing spanning trees. 
We allow parallel edges and loops in graphs.

The last section will discuss more conjectures related to the cycle double cover conjecture.

\bigskip
\noindent\textbf{Statement of AI use.}
Parts of the revision were carried out with GPT 5.6 under the author's guidance.

\section{Splitting lemma}
For an integer $k$, 
a graph~$G$ with at least two vertices is \emph{$k$-edge-connected} 
if for every $X\subseteq V(G)$ with $0<\abs{X}<\abs{V(G)}$, 
$G$ has at least $k$ edges having one end in $X$ and the other end in~$V(G)\setminus X$.
We omit the proof of the following lemma.
\begin{lemma}[Fleischner's splitting lemma~\cite[Lemma~III.26]{Fleischner1990}]
    \label{lem:fleischner-splitting}
    Let $G$ be a $2$-edge-connected graph, let $v$ be a vertex of degree at least
    four, and let $e_1,e_2,e_3$ be distinct edges incident with $v$. For each $i$, let
    $v_i$ be the other end of $e_i$. If $G-\{e_1,e_2,e_3\}$ is connected, then at
    least one of
    \[
        (G-\{e_1,e_2\})+v_1v_2
        \quad\text{and}\quad
        (G-\{e_2,e_3\})+v_2v_3
    \]
    is $2$-edge-connected.
\end{lemma}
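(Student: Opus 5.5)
The plan is to argue by contradiction and translate each failure of $2$-edge-connectivity into a small edge cut of $G$, then play two such cuts against each other by counting edges across the four ``quadrants''. Write $G_1=(G-\{e_1,e_2\})+v_1v_2$ and $G_3=(G-\{e_2,e_3\})+v_2v_3$, and let $d(S)$ denote the number of edges of $G$ with exactly one end in $S$.

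First I would check that $G_1$ and $G_3$ are connected. Each contains $G-\{e_1,e_2,e_3\}$, which is connected by hypothesis and spans $V(G)$, so this is immediate. Hence if $G_1$ is not $2$-edge-connected, there is a set $X$ with $v\in X\neq V(G)$ (replace $X$ by its complement if needed) and $d_{G_1}(X)=1$. Comparing $d_{G_1}(X)$ with $d_G(X)$ according to which of $v_1,v_2$ lie in $X$ gives the following.
\begin{itemize}
\item If exactly one of $v_1,v_2$ lies in $X$, or both do, then $d_{G_1}(X)=d_G(X)\ge 2$.
\item If neither lies in $X$, then $d_{G_1}(X)=d_G(X)-2$.
\end{itemize}
So $v_1,v_2\notin X$ and $d_G(X)=3$. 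If also $v_3\notin X$, the cut of $X$ would be exactly $\{e_1,e_2,e_3\}$, contradicting connectivity of $G-\{e_1,e_2,e_3\}$. Hence $v_3\in X$. Symmetrically, failure of $G_3$ gives $Y$ with $v,v_1\in Y$, $v_2,v_3\notin Y$ and $d(Y)=3$.

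Next I would partition $V(G)$ into $X\cap Y$, $A=Y\setminus X$, $B=X\setminus Y$ and $O=V(G)\setminus(X\cup Y)$. These contain respectively $v$, $v_1$, $v_3$ and $v_2$, so all four parts are nonempty. Set $a=e(X\cap Y,A)$, $b=e(X\cap Y,B)$, $c=e(X\cap Y,O)$, $x=e(B,O)$, $y=e(A,O)$ and $z=e(A,B)$. Then $a,b,c\ge 1$, because $e_1,e_3,e_2$ witness these. The two cut conditions read
\[
a+c+x+z=3,\qquad b+c+y+z=3.
\]
The constraints I will use are:
\begin{itemize}
\item $d(X\cap Y)=a+b+c\ge 4$, since $e_1,e_2,e_3$ all leave $X\cap Y$ and do not form a cut;
\item $d(X\cup Y)=c+x+y\ge 2$;
\item $d(A)\ge 2$ and $d(B)\ge 2$, by $2$-edge-connectivity.
\end{itemize}

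The main step, and the only real obstacle, is making this bookkeeping yield a contradiction. I expect a short case analysis to suffice. Adding the cut equations and using $a+b+c\ge 4$ forces $c+x+y+2z\le 2$, so $z=0$, $c+x+y=2$ and $a+b+c=4$. Since $c\ge1$, this leaves $c=1$ with $x+y=1$, or $c=2$ with $x=y=0$. In the second case the cut equations give $a=b=1$, so $a+b+c=4$ fails. In the first case, say $x=1$ and $y=0$, the equations give $a=1$ and $b=2$. Then $d(A)=a+y+z=1$, contradicting $2$-edge-connectivity. The case $y=1$, $x=0$ is symmetric, with $d(B)=1$. This contradiction shows that $G_1$ or $G_3$ is $2$-edge-connected.
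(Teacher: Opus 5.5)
The paper gives no proof of this lemma to compare against: it cites Fleischner's book and explicitly omits the argument. Your uncrossing argument supplies a self-contained proof, and its structure is sound.

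The following steps all check out:
\begin{itemize}
\item A failure of $G_1$ yields $X$ with $v,v_3\in X$, $v_1,v_2\notin X$ and $d(X)=3$; symmetrically for $Y$.
\item This covers loops and parallel edges, because in the contradiction scenario $v,v_1,v_2,v_3$ fall into four different parts.
\item The six counts, the two cut equations and the lower bounds $a,b,c\ge1$ are correct, and so are the four auxiliary constraints.
\item The deduction $z=0$, $c+x+y=2$, $a+b+c=4$ is correct.
\end{itemize}

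There is, however, one false step in the final case analysis. In the case $c=2$, $x=y=0$, the cut equations give $a=b=1$. Then $a+b+c=1+1+2=4$, which is exactly the value you had just derived, so nothing ``fails'' there. The contradiction in this case comes instead from $d(A)=a+y+z=1$ (equally $d(B)=b+x+z=1$). This violates the $2$-edge-connectivity of $G$, a constraint you had already listed, and with that correction the case analysis closes.

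A uniform way to finish avoids splitting into cases. With $z=0$ the cut equations give $a=3-c-x$ and $b=3-c-y$, so $d(A)+d(B)=a+y+b+x=6-2c$, which forces $c=1$. Then $x+y=1$, so one of $d(A)=2+y-x$ and $d(B)=2+x-y$ equals $1$.
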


If $G$ is $3$-edge-connected and $d(v)\ge4$, then the three edges in
\zcref{lem:fleischner-splitting} can always be chosen. If $G-v$ is connected, a
spanning tree of $G-v$ together with one edge incident with $v$ is a spanning tree of
$G$ that omits at least three edges incident with $v$. If $G-v$ is disconnected,
every component of $G-v$ sends at least three edges to $v$. Taking a spanning tree in
each component and one edge from that component to $v$ again gives a spanning tree of
$G$ that omits at least three edges incident with $v$. Any three such omitted edges
have the required property.
\section{Minimum counterexample to the cycle double cover conjecture}
We say a graph is \emph{cubic} if every vertex has degree $3$.
The following reduction is well known, see Zhang~\cite{Zhang2012}.
\begin{proposition}\label{prop:min}
    The cycle double cover conjecture is true if and only if it is true for 
    cubic $3$-edge-connected graphs.
\end{proposition}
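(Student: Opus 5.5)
One direction is trivial: cubic $3$-edge-connected graphs are bridgeless. For the converse, I would take a counterexample $G$ to \zcref{thm:cdc} with $\abs{V(G)}+\abs{E(G)}$ minimum, and then show in turn that $G$ is connected, $2$-edge-connected, $3$-edge-connected, and cubic. Connectivity is immediate, since CDCs of components combine. Loops can be removed: if $e$ is a loop, then $G-e$ is still bridgeless, and a CDC of $G-e$ together with two copies of the cycle $\{e\}$ gives a CDC of $G$.

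\textbf{Degree at least $3$.} Vertices of degree $1$ do not occur, since their edge would be a bridge, and degree $0$ is excluded by connectivity unless $G$ is a single vertex, which is trivial. If $v$ has degree $2$ with neighbours $u,w$ via edges $e,f$, I would suppress $v$: replace $e,f$ by a single edge $uw$. The case $u=w$ gives a loop in the new graph, which is allowed. The new graph is bridgeless and smaller, so it has a CDC, and substituting the path $uvw$ for $uw$ in each cycle gives a CDC of $G$.

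\textbf{$3$-edge-connectivity.} Suppose $\{e,f\}$ is a $2$-edge-cut with sides $X$ and $Y$. Write $e=x_1y_1$ and $f=x_2y_2$ with $x_i\in X$ and $y_i\in Y$. Form $G_X$ from $G[X]$ by adding an edge $a=x_1x_2$, and $G_Y$ from $G[Y]$ by adding $b=y_1y_2$; these may be loops or parallel edges. Both graphs are bridgeless: a bridge of $G_X$ would give a bridge of $G$, because the path through $Y$ plays the role of $a$. Both are smaller than $G$, because $\abs{Y}\ge1$ and two edges are removed while only one is added. Hence they have CDCs. The edge $a$ lies in exactly two cycles $C_1,C_2$ of the CDC of $G_X$, and $b$ lies in exactly two cycles $D_1,D_2$ of the CDC of $G_Y$. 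The glued cycles $(C_i-a)\cup(D_i-b)\cup\{e,f\}$ for $i=1,2$, together with all the remaining cycles, form a CDC of $G$. A $1$-edge-cut cannot occur, since $G$ is bridgeless.

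\textbf{Cubic.} This is the main step. Suppose $G$ is $3$-edge-connected and some vertex $v$ has $d(v)\ge4$. By the remark after \zcref{lem:fleischner-splitting}, there are edges $e_1,e_2,e_3$ at $v$ with $G-\{e_1,e_2,e_3\}$ connected. The lemma then gives, after renaming, that $G'=(G-\{e_1,e_2\})+v_1v_2$ is $2$-edge-connected. This graph has the same number of vertices and one fewer edge, so it is smaller and has a CDC. Replacing the new edge $v_1v_2$ by the path $v_1 e_1 v e_2 v_2$ in each of the two cycles that use it gives closed trails, hence Eulerian subgraphs, in $G$. Each edge is still covered exactly twice: $e_1$ and $e_2$ are covered exactly where $v_1v_2$ was. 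Decomposing each Eulerian subgraph into cycles, as noted in the introduction, gives a CDC of $G$, a contradiction.

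The delicate point is that the substitution may create a closed walk that revisits $v$ or even uses an edge twice. It cannot use an edge twice, since $e_1,e_2\notin G'$. Therefore each substituted object is an Eulerian subgraph, and the equivalent Eulerian-subgraph formulation of a CDC handles the repeated vertices. So $G$ is cubic and $3$-edge-connected, which contradicts the assumption that the conjecture holds for such graphs.
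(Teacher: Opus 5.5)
Your proof is correct. Its skeleton matches the paper's: a minimum counterexample, then removal of loops and disconnectedness, then $3$-edge-connectivity, and finally Fleischner splitting at a vertex of degree at least four. In that last step your lifting argument is essentially the paper's: substitute the path $v_1e_1ve_2v_2$, note that no edge repeats because $e_1,e_2\notin E(G')$, and decompose the resulting closed trails.

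The real difference is the $2$-edge-cut step.
\begin{itemize}
\item The paper contracts $e_1$ and lifts a single cycle double cover of $G/e_1$ by reinserting $e_1$ into the two cycles through $e_2$. This keeps everything inside one smaller graph, but it needs a small case split for when $e_2$ becomes a loop.
\item You cut $G$ into $G_X=G[X]+x_1x_2$ and $G_Y=G[Y]+y_1y_2$ and glue the two pairs of cycles through the marker edges. This costs two bridgelessness checks and two applications of minimality. In exchange, the glued objects $(C_i-a)\cup(D_i-b)\cup\{e,f\}$ are visibly cycles, and the loop cases $x_1=x_2$ or $y_1=y_2$ go through uniformly.
\end{itemize}

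In your bridgelessness check for $G_X$ you implicitly use that $G[Y]$ contains a $y_1$--$y_2$ path. This holds: otherwise the component of $G[Y]$ containing $y_1$ would be joined to the rest of $G$ only by $e$, so $e$ would be a bridge. You should state this explicitly.

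Your separate degree-$2$ suppression step is harmless but redundant. For a loopless graph with at least two vertices, $\delta(v)$ of a degree-$2$ vertex is already a $2$-edge-cut, so your next step handles it, and $3$-edge-connectivity forces minimum degree at least $3$.
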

\begin{proof}
    Suppose the cycle double cover conjecture is false, and let $G$ be a counterexample with
    $\abs{E(G)}$ minimum.    
    Deleting a loop and covering it twice shows that $G$ has no
    loops, and considering components shows that $G$ is connected.

    We first show that $G$ is $3$-edge-connected. Otherwise it has a $2$-edge-cut
    $\{e_1,e_2\}$. Contract~$e_1$, and let $w$ be the resulting vertex. Contraction
    cannot create a bridge, so $G/e_1$ is a smaller bridgeless graph and hence has a
    cycle double cover $\mathcal{C}$ by minimality.

    Consider the two shores of $G-\{e_1,e_2\}$. If $e_2$ is not a loop in $G/e_1$,
    a cycle of $\mathcal{C}$ containing~$e_2$ passes through~$w$ from one shore to
    the other, and a cycle not containing $e_2$ cannot do so. Split~$w$ back into
    the ends of $e_1$ and insert $e_1$ into the two cycle occurrences containing
    $e_2$; all other cycles lift unchanged. If $e_2$ is a loop in $G/e_1$, replace
    its two occurrences instead by two copies of the $2$-cycle formed by $e_1$ and
    $e_2$. In either case we obtain a cycle double cover of $G$, a contradiction.
    Thus $G$ is $3$-edge-connected.

    Suppose that $d(v)\ge4$ for some vertex $v$. Choose $e_1,e_2,e_3$ as above and
    apply \zcref{lem:fleischner-splitting}. Let~$H$ be the resulting
    $2$-edge-connected graph with $\abs{E(H)}<\abs{E(G)}$, so by minimality it has a
    cycle double cover. Replacing each occurrence of its new edge by the corresponding
    two-edge path through $v$, and decomposing closed trails into cycles if necessary,
    gives a cycle double cover of $G$, a contradiction. Hence $G$ is cubic.
\end{proof}

\section{Disjoint spanning trees}

\begin{theorem}[Tutte~\cite{Tutte1961a} and Nash-Williams~\cite{NashWilliams1961}]\label{thm:nashwilliams}
    A graph $G$ has $t$ edge-disjoint spanning trees if and only if
    for every partition $V_1,\ldots,V_m$ of $V(G)$ with nonempty parts,
    there are at least $t(m-1)$ edges joining distinct parts.
\end{theorem}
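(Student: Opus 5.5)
The necessity direction is a counting argument. Suppose $T_1,\ldots,T_t$ are edge-disjoint spanning trees and $V_1,\ldots,V_m$ is a partition of $V(G)$ into nonempty parts. Contracting each part to a single vertex turns each $T_i$ into a connected spanning subgraph of an $m$-vertex multigraph. Such a subgraph has at least $m-1$ edges, and every one of those edges joins distinct parts. Because the trees are edge-disjoint, this already yields at least $t(m-1)$ crossing edges.

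For sufficiency I would use the matroid union theorem (Nash-Williams/Edmonds) applied to the cycle matroid $M(G)$. Its rank form is
\[
  \operatorname{rank}_{M_1\vee\cdots\vee M_t}(E)=\min_{F\subseteq E}\bigl(\abs{E\setminus F}+t\,r(F)\bigr),
\]
where $r$ is the graphic rank $r(F)=\abs{V(G)}-c(F)$ and $c(F)$ is the number of components of the spanning subgraph $(V(G),F)$. We have $t$ edge-disjoint spanning trees exactly when the union has rank $t(\abs{V(G)}-1)$. So it suffices to prove, for every $F\subseteq E$,
\[
  \abs{E\setminus F}\ge t\bigl(c(F)-1\bigr).
\]
This follows from the hypothesis. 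Let $V_1,\ldots,V_m$ be the vertex sets of the components of $(V(G),F)$, so $m=c(F)$. No edge of $F$ joins distinct parts, so every edge that does join distinct parts lies in $E\setminus F$. The hypothesis then gives $\abs{E\setminus F}\ge t(m-1)$, as required.

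The main obstacle is that the matroid union theorem is not stated in the paper, and proving it amounts to the usual augmenting-path exchange argument. To keep the proof self-contained I would instead argue directly. Take $t$ edge-disjoint forests $F_1,\ldots,F_t$ with $\sum\abs{F_i}$ maximum. Suppose some $F_i$ is not spanning. Run the standard augmentation: start from an edge $e_0$ joining two components of $F_i$, and repeatedly swap edges along the fundamental cycles of the forests. Let $X$ be the set of edges reachable in this process. If augmentation fails, then for every $j$ the forest $F_j$ spans each component of the subgraph $X$. Let $V_1,\ldots,V_m$ be the vertex sets of those components.

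From here the goal is a contradiction via counting. Each forest contains few edges joining distinct parts, and $E\setminus X$ contains no edge joining distinct parts that was left unused. Together these give fewer than $t(m-1)$ edges joining distinct parts, contradicting the hypothesis. The delicate part, where I expect most of the work, is setting up the reachability relation correctly so that the closure really does yield this partition.

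Since the paper presents this result as an assumed classical theorem, in the exposition I would cite the matroid union theorem for sufficiency and give only the counting argument for necessity in full.
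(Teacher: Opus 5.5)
Your argument is correct and follows the route the paper points to: the paper omits the proof but names the matroid union theorem as the easiest way. Your contraction count for necessity, together with reducing sufficiency to $\abs{E\setminus F}\ge t(c(F)-1)$ via the partition into components of $(V(G),F)$, is how that route goes; your direct augmentation sketch is unnecessary and not yet a proof as written (the partition should come from the exchange closure of the edges lying in no forest, and shortest exchange sequences are needed to keep every $F_j$ acyclic), so citing the matroid union theorem, as you ultimately propose, is the right choice.
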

One of the easiest ways to prove \zcref{thm:nashwilliams} is to use the matroid union theorem.
There are purely graph-theoretic proofs known \cite{Kaiser2012}.
We omit its proof.

The following corollary of \zcref{thm:nashwilliams} is due to Kundu~\cite{Kundu1974}.
\begin{corollary}\label{cor:disjointspanningtree}
    If $G$ is $2k$-edge-connected, then
    $G$ has $k$ edge-disjoint spanning trees.
\end{corollary}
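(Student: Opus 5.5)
We verify the partition condition of \zcref{thm:nashwilliams} with $t=k$ by a double-counting argument. Let $V_1,\ldots,V_m$ be any partition of $V(G)$ into nonempty parts. If $m=1$ the required bound $k(m-1)=0$ is trivial, so assume $m\ge 2$. For each part $V_i$ we have $0<\abs{V_i}<\abs{V(G)}$, since some other part is nonempty. Therefore $2k$-edge-connectivity gives at least $2k$ edges with exactly one end in $V_i$.

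Next we sum these counts over all parts. Let $F$ be the set of edges joining distinct parts. An edge of $F$ with ends in $V_i$ and $V_j$, $i\ne j$, is counted exactly twice in $\sum_i \delta(V_i)$, where $\delta(V_i)$ is the number of edges with exactly one end in $V_i$: once for $V_i$ and once for $V_j$. An edge with both ends in the same part, including a loop, is never counted. Hence
\[
    2\abs{F}=\sum_{i=1}^m \delta(V_i)\ge 2km,
\]
so $\abs{F}\ge km\ge k(m-1)$. By \zcref{thm:nashwilliams}, $G$ has $k$ edge-disjoint spanning trees.

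There is essentially no obstacle here. The only points needing care are that every part is a proper nonempty subset when $m\ge2$, so the edge-connectivity hypothesis applies to each part, and that each crossing edge is counted exactly twice in the sum. The argument in fact gives the stronger bound $km$, so the corollary holds with room to spare.
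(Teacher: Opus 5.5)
Your proof is correct and matches the paper's argument: you verify the partition condition of the Tutte--Nash-Williams theorem by summing the at least $2k$ boundary edges of each part, note that every crossing edge is counted twice, and obtain at least $km\ge k(m-1)$ crossing edges. Your separate treatment of $m=1$ is a small improvement, because the paper's text tacitly applies edge-connectivity to the part $V_1=V(G)$, whose boundary is empty.
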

\begin{proof}
    Let $V_1,\ldots,V_m$ be a partition
    of $V(G)$ with nonempty parts.
    Each set $V_i$ has at least $2k$ edges in its edge boundary.
    When summing these for all $i$, we count each crossing edge twice.
    Therefore, the number of edges joining distinct parts of $V_1,\ldots,V_m$ is at least $km$,
    which is at least $k(m-1)$.
\end{proof}

Jaeger~\cite{Jaeger1976,Jaeger1979} and Kilpatrick~\cite{Kilpatrick1975} independently proved that
every $2$-edge-connected graph has a nowhere-zero $\mathbb{Z}_2^3$-flow
by using \zcref{thm:nashwilliams}.
Here I would like to write its main idea, without mentioning the nowhere-zero $8$-flows.

Both Jaeger~\cite[Proposition 8(a)]{Jaeger1979}
and Kilpatrick~\cite[Proposition 25.2]{Kilpatrick1975}
proved the following lemma by using the matroid union theorem of Edmonds~\cite{Edmonds1965b} directly. 
Later Jaeger~\cite{Jaeger1988} presented the following simpler proof based on \zcref{cor:disjointspanningtree}.

\begin{lemma}\label{lem:three-trees}
    Let $G$ be a $3$-edge-connected graph.
    Then there are three spanning trees $T_1$, $T_2$, $T_3$ such that
    $E(T_1)\cap E(T_2)\cap E(T_3)=\emptyset$.
\end{lemma}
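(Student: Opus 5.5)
The plan is to double every edge and apply \zcref{cor:disjointspanningtree} to the resulting graph. Let $G'$ be obtained from $G$ by replacing each edge $e$ with two parallel copies $e'$ and $e''$. Every edge cut of $G'$ has exactly twice as many edges as the corresponding cut of $G$. Since $G$ is $3$-edge-connected, $G'$ is therefore $6$-edge-connected. By \zcref{cor:disjointspanningtree} with $k=3$, the graph $G'$ has three edge-disjoint spanning trees $T_1'$, $T_2'$, $T_3'$.

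Next we project these trees back to $G$. Let $\pi$ map each copy $e'$ or $e''$ to the original edge $e$. A spanning tree of $G'$ never uses both copies of an edge, since they would form a $2$-cycle. Hence $\pi$ restricted to $E(T_i')$ is injective, and $T_i:=\pi(T_i')$ is a spanning tree of $G$: it is spanning, acyclic, and has $\abs{V(G)}-1$ edges.

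It remains to check that $E(T_1)\cap E(T_2)\cap E(T_3)=\emptyset$. Suppose an edge $e$ of $G$ lies in all three $T_i$. Then each $T_i'$ contains $e'$ or $e''$. The trees $T_i'$ are pairwise edge-disjoint, so these are three distinct edges of $G'$. But $e$ has only two copies, a contradiction.

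No step here is a real obstacle; the main point is simply to see that doubling the edges is the right move. The only detail needing care is the case where $G$ already has parallel edges. This causes no trouble: each edge of $G$ is doubled separately, and cuts are still counted with multiplicity, so the argument above goes through unchanged.
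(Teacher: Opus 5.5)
Your proposal is correct and is the paper's proof: you double every edge to obtain a $6$-edge-connected graph $G'$, apply \zcref{cor:disjointspanningtree} with $k=3$, and project the three edge-disjoint spanning trees back to $G$. You also spell out two points the paper leaves implicit. A spanning tree of $G'$ never contains both copies of an edge, so each projection is a spanning tree of $G$. And three pairwise disjoint trees cannot use three copies of an edge that has only two.
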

\begin{proof}
    Let $G'$ be a graph obtained from $G$ by adding a parallel copy of every edge so that $\abs{E(G')}=2\abs{E(G)}$.
    Then $G'$ is $6$-edge-connected.
    By \zcref{cor:disjointspanningtree},
    $G'$ has three edge-disjoint spanning trees $T_1'$, $T_2'$, $T_3'$.
    Those three spanning trees correspond to spanning trees $T_1$, $T_2$, $T_3$ of $G$
    such that no edge belongs to all three of them at the same time.
\end{proof}

For a vertex $v$ of a graph $G$, let
$\delta(v)=\{e\in E(G):v\text{ is an end of }e\}$.
\begin{lemma}\label{lem:tree-to-eulerian}
    Let $T$ be a spanning tree of a graph $G$.
    Then there is a set $F$ of edges of $G$
    such that $E(G)\setminus E(T)\subseteq F$
    and $\abs{F\cap\delta(v)}$ is even for every vertex $v$ of $G$.
\end{lemma}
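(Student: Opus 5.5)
The idea is to start from the complement of the tree, which already contains the required edges, and then fix the parity at each vertex by adding a suitable set of tree edges. Let $N=E(G)\setminus E(T)$. Every loop lies in $N$ and contributes $2$ to the degree, or $0$ modulo $2$ under either convention, so loops never affect parity and we may ignore them. Let $S$ be the set of vertices $v$ for which $\abs{N\cap\delta(v)}$ is odd. By the handshake argument applied to the subgraph $(V(G),N)$, $\abs{S}$ is even. We look for a set $J\subseteq E(T)$ such that $\abs{J\cap\delta(v)}$ is odd exactly when $v\in S$, and then set $F=N\cup J$. Since $N$ and $J$ are disjoint, $\abs{F\cap\delta(v)}=\abs{N\cap\delta(v)}+\abs{J\cap\delta(v)}$ is even for every $v$. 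Also $N\subseteq F$, as required.

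The key step is the standard fact that a tree has a $T$-join for every even set of vertices. Define $J$ as follows: an edge $e\in E(T)$ belongs to $J$ if and only if one of the two components of $T-e$ contains an odd number of vertices of $S$. Because $\abs{S}$ is even, this condition does not depend on which component we look at, so $J$ is well defined.

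It remains to check the parity at a vertex $v$. Removing $v$ from $T$ splits the tree into subtrees $C_1,\dots,C_d$, one for each tree edge $e_i$ at $v$. The edge $e_i$ lies in $J$ exactly when $\abs{S\cap V(C_i)}$ is odd. Hence
\[
\abs{J\cap\delta(v)}\equiv\sum_{i}\abs{S\cap V(C_i)}=\abs{S\setminus\{v\}}\pmod 2,
\]
and the right-hand side is odd precisely when $v\in S$, because $\abs{S}$ is even.

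An alternative route to the same $J$ is to pair up the vertices of $S$ arbitrarily and take the symmetric difference of the tree paths joining each pair. The cut-based definition above avoids having to verify that symmetric difference. The only point needing care is this parity count at each vertex, and it is routine.
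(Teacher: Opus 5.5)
Your proof is correct, but it takes a different route from the paper. The paper never looks at the odd-degree vertices of $E(G)\setminus E(T)$. It defines $F$ as the symmetric difference of the fundamental cycles $C_e$ of the non-tree edges $e$. Since $C_f\subseteq E(T)\cup\{f\}$, each non-tree edge lies in exactly one of these cycles and so survives in $F$. Even degree at every vertex is inherited from the cycles, because $\abs{(A\triangle B)\cap\delta(v)}\equiv\abs{A\cap\delta(v)}+\abs{B\cap\delta(v)}\pmod 2$.

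You instead start from $N$, find its odd-degree set $S$, and repair the parities with an $S$-join $J$ inside $T$. This costs a handshake argument and the local count at each vertex. In fact both constructions produce the \emph{same} set: if $N\cup J$ and $N\cup J'$ both have all degrees even with $J,J'\subseteq E(T)$, then $J\triangle J'$ is an even-degree edge set in a forest and hence empty. So the difference lies entirely in the verification.

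The paper's version is shorter, since both required properties are built into the generators $C_e$. Yours gives an explicit description of the tree part of $F$: a tree edge $e$ belongs to $F$ exactly when each component of $T-e$ contains an odd number of vertices of $S$, equivalently when an odd number of non-tree edges join the two components. Along the way it also proves the reusable fact that a tree has an $S$-join for every even set $S$.

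A minor caveat on your aside about loops. With the paper's literal definition $\delta(v)=\{e\in E(G): v\text{ is an end of }e\}$, a loop at $v$ is a single element of $\delta(v)$ and contributes $1$, not $0$, to $\abs{F\cap\delta(v)}$. Under that reading the statement fails for one vertex with one loop. So the lemma must be read with loops counted twice, as in the degree, which is what you and the paper's proof (where a loop is its own fundamental cycle) both tacitly do.
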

\begin{proof}
    For every $e\in E(G)\setminus E(T)$,
    there is a cycle $C_e$ in $T+e$. Define
    \[
        F=\mathop{\triangle}_{e\in E(G)\setminus E(T)} E(C_e),
    \]
    which is the set of all edges in an odd number of
    $C_e$'s.
    Now the conclusion follows.
\end{proof}

For an abelian group $\Gamma$, a \emph{nowhere-zero $\Gamma$-flow} of a graph $G$ 
is a function $\phi:E(\vec G)\to\Gamma$ for an orientation $\vec G$ of $G$ 
such that 
$\sum_{e\in \delta^+(v)}\phi(e)=\sum_{e\in \delta^-(v)}\phi(e)$ for every vertex $v$ of $G$
and $\phi(e)\neq0$ for all edges $e$ of $G$.
If $x=-x$ for every $x\in\Gamma$, then there is no need to specify an orientation,
as in our setting.
\begin{theorem}[Jaeger \cite{Jaeger1976,Jaeger1979}]\label{thm:jaeger-8flow}
    Every $2$-edge-connected graph has a nowhere-zero $\mathbb{Z}_2^3$-flow.
\end{theorem}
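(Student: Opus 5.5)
We follow Jaeger's argument, but first reduce to $3$-edge-connected graphs so that \zcref{lem:three-trees} applies.

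\emph{Reduction.} Suppose the theorem fails, and take a counterexample $G$ with $\abs{E(G)}$ minimum. Loops can receive any nonzero value, so $G$ has no loops, and we may assume $G$ is connected. Suppose $G$ has a $2$-edge-cut $\{e_1,e_2\}$. Then $G/e_1$ is bridgeless, so it has a nowhere-zero $\mathbb{Z}_2^3$-flow $\phi$. Extend $\phi$ to $G$ by setting $\phi(e_1)=\phi(e_2)$. To see that this is a flow on $G$, note that the total flow across any edge cut is $0$ in characteristic~$2$. Applied to the cut $\{e_1,e_2\}$, this forces $\phi(e_1)=\phi(e_2)$. With this choice, the conservation law holds at both ends of $e_1$: at each end it is inherited from the conservation law at the contracted vertex, restricted to one shore. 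Since $\phi(e_2)\ne0$, the extension is nowhere-zero, a contradiction. Hence $G$ is $3$-edge-connected (or has at most one vertex, which is trivial).

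\emph{Construction.} By \zcref{lem:three-trees}, $G$ has spanning trees $T_1,T_2,T_3$ with $E(T_1)\cap E(T_2)\cap E(T_3)=\emptyset$. For each $i$, apply \zcref{lem:tree-to-eulerian} to $T_i$ to obtain an edge set $F_i\supseteq E(G)\setminus E(T_i)$ such that every vertex is incident with an even number of edges of $F_i$. Let $\chi_i$ be the indicator function of $F_i$, which is a $\mathbb{Z}_2$-flow because of the parity condition. Define
\[
\phi(e)=(\chi_1(e),\chi_2(e),\chi_3(e))\in\mathbb{Z}_2^3 .
\]
Each coordinate satisfies conservation at every vertex, so $\phi$ is a $\mathbb{Z}_2^3$-flow. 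For any edge $e$, some $T_i$ omits $e$, so $e\in F_i$ and $\phi(e)\ne0$. Thus $\phi$ is nowhere-zero.

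The only delicate step is the lifting through the contraction in the reduction. There we must check that conservation at the two ends of $e_1$ follows from conservation at the merged vertex, which rests on the cut-sum identity in characteristic~$2$. Everything else follows directly from the earlier lemmas.
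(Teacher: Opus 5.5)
Your proof is correct. The construction in your second paragraph is the paper's proof of \zcref{lem:jaeger-flow}. The real difference is that the paper never proves the statement for $2$-edge-connected graphs. It cites Jaeger and remarks that only the $3$-edge-connected case is needed, because \zcref{prop:min} has already reduced the cycle double cover problem to cubic $3$-edge-connected graphs. It then proves only that case, and later uses the general theorem without proof in the $7$-cycle $4$-cover theorem. Your minimum-counterexample reduction is the flow analogue of the $2$-edge-cut step in \zcref{prop:min}. It supplies the missing part and makes the full statement self-contained for one extra paragraph of work.

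Two points in the reduction should be tightened. First, run the minimality over all bridgeless graphs, as you implicitly do. The reason is that $G/e_1$ can be a single vertex carrying a loop, for instance when $G$ is two parallel edges, and that graph is not $2$-edge-connected in the paper's sense.

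Second, the sentence saying conservation at the ends of $e_1$ is \emph{inherited} from the merged vertex is not yet an argument. Conservation at the merged vertex only shows that $\sum_{f\in\delta(a)\setminus\{e_1\}}\phi(f)$ and $\sum_{f\in\delta(b)\setminus\{e_1\}}\phi(f)$ are equal. The cut identity you quote holds for flows on $G$, which is exactly what you are trying to establish, so it only motivates the choice $\phi(e_1)=\phi(e_2)$. The clean verification goes as follows.
\begin{itemize}
\item Let $X$ be the shore containing the end $a$ of $e_1$.
\item Conservation holds at every vertex of $X\setminus\{a\}$, since its incident edges are unchanged by the contraction.
\item Summing $\sum_{f\in\delta(x)}\phi(f)$ over all $x\in X$ counts each edge inside $X$ twice and each of $e_1,e_2$ once. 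So the total is $\phi(e_1)+\phi(e_2)=0$.
\end{itemize}
Hence the sum at $a$ vanishes, and symmetrically the sum at $b$ does too.
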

We only need its special case.
\begin{lemma}[Jaeger \cite{Jaeger1976,Jaeger1979}; weaker]\label{lem:jaeger-flow}
    Every $3$-edge-connected graph has a nowhere-zero $\mathbb{Z}_2^3$-flow.
\end{lemma}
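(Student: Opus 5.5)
We combine \zcref{lem:three-trees} and \zcref{lem:tree-to-eulerian}. Let $G$ be $3$-edge-connected. By \zcref{lem:three-trees}, fix spanning trees $T_1$, $T_2$, $T_3$ of $G$ with $E(T_1)\cap E(T_2)\cap E(T_3)=\emptyset$. For each $i\in\{1,2,3\}$, apply \zcref{lem:tree-to-eulerian} to $T_i$ to obtain an edge set $F_i$ with $E(G)\setminus E(T_i)\subseteq F_i$ and $\abs{F_i\cap\delta(v)}$ even for every vertex $v$.

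Now define $\phi:E(G)\to\mathbb{Z}_2^3$ by letting the $i$-th coordinate of $\phi(e)$ be $1$ if $e\in F_i$ and $0$ otherwise; that is, $\phi=(\chi_{F_1},\chi_{F_2},\chi_{F_3})$. Since every element of $\mathbb{Z}_2^3$ is its own inverse, no orientation is needed. The flow condition at a vertex $v$ then says that $\sum_{e\in\delta(v)}\phi(e)=0$ coordinatewise. The $i$-th coordinate of this sum is $\abs{F_i\cap\delta(v)}\bmod 2$, which is $0$ by the choice of $F_i$. (A loop contributes its value twice to this sum, so it is harmless, and it is consistent with the parity convention of $\delta(v)$ used in \zcref{lem:tree-to-eulerian}.)

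It remains to check that $\phi$ is nowhere-zero. Take any edge $e$. Since $e$ lies in no common intersection of the three trees, there is some $i$ with $e\notin E(T_i)$. Hence $e\in E(G)\setminus E(T_i)\subseteq F_i$, so the $i$-th coordinate of $\phi(e)$ is $1$ and $\phi(e)\neq0$.

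There is no real obstacle here: the substantive work is already packaged in \zcref{lem:three-trees}, which relies on Nash-Williams--Tutte through \zcref{cor:disjointspanningtree}. The only point needing a moment's care is the treatment of loops and parallel edges in the parity count at each vertex. Loops lie in no spanning tree, so they always belong to every $F_i$ and receive the value $(1,1,1)$. Since a loop meets its vertex twice, it contributes $0$ to the sum there, consistent with the convention.
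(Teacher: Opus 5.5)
Your proof is correct and is essentially identical to the paper's. Both take three spanning trees with empty common intersection from \zcref{lem:three-trees}, even-degree sets $F_i\supseteq E(G)\setminus E(T_i)$ from \zcref{lem:tree-to-eulerian}, and $\phi=(\chi_{F_1},\chi_{F_2},\chi_{F_3})$, which is nowhere-zero because every edge is omitted by some $T_i$. Your extra remark on loops goes beyond the paper, which does not address them: it rests on counting a loop twice at its vertex, whereas the paper literally defines $\delta(v)$ as a set, and your convention is exactly what \zcref{lem:tree-to-eulerian} needs to hold when loops are present.
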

\begin{proof}
By
    \zcref{lem:three-trees}, choose spanning trees $T_1,T_2,T_3$ such that
    $E(T_1)\cap E(T_2)\cap E(T_3)=\emptyset$. For each $i\in\{1,2,3\}$,
    \zcref{lem:tree-to-eulerian} gives a set $F_i\subseteq E(G)$ containing
    $E(G)\setminus E(T_i)$ and having even degree at every vertex. Let $\phi_i$ be
    the characteristic function of $F_i$ and define
    \[
        \phi:E(G)\longrightarrow\mathbb{Z}_2^3,
        \qquad
        \phi(e)=
        \begin{pmatrix}
            \phi_1(e)\\
            \phi_2(e)\\
            \phi_3(e)
        \end{pmatrix}.
    \]
    Each coordinate sums to zero at every vertex. Moreover, every edge $e$ is
    omitted by some $T_i$ and hence belongs to $F_i$. Thus $\phi(e)\ne0$, so
    $\phi$ is a nowhere-zero $\mathbb{Z}_2^3$-flow.
\end{proof}

\section{Building a cycle double cover}

\begin{lemma}[Two-element edge labels]\label{lem:two-element-edge-labels}
    Let $G$ be a loopless graph and let $\Gamma$ be a finite set. Suppose that every edge $e$ is
    assigned a two-element set $P_e\subseteq\Gamma$ such that
    \[
        \abs{\{e\in\delta(v):s\in P_e\}} 
        \text{ is even}
        \qquad \text{ for all }v\in V(G) \text{ and } s\in\Gamma.
    \]
    Then $G$ has a cycle double cover.
\end{lemma}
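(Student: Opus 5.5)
For each $s\in\Gamma$, let $F_s=\{e\in E(G): s\in P_e\}$. I will take the list $(F_s)_{s\in\Gamma}$ as the candidate double cover. By hypothesis, every vertex $v$ is incident with an even number of edges of $F_s$. Since $G$ is loopless, $\abs{F_s\cap\delta(v)}$ is exactly the degree of $v$ in the subgraph $(V(G),F_s)$. So each $F_s$ spans an Eulerian subgraph of $G$.

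Next I count how many of these subgraphs contain a given edge $e$. The edge $e$ lies in $F_s$ exactly when $s\in P_e$. Since $\abs{P_e}=2$, there are exactly two elements $s\neq s'$ with $e\in F_s$ and $e\in F_{s'}$. Because $s\neq s'$, these are two different members of the list. Hence every edge lies in exactly two of the Eulerian subgraphs $F_s$, counted over the indices $s\in\Gamma$.

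Finally, as noted in the introduction, every Eulerian subgraph decomposes into pairwise edge-disjoint cycles. Decomposing each $F_s$ this way and concatenating the resulting lists gives a list of cycles. An edge of $F_s$ lies in exactly one cycle of the decomposition of $F_s$, so every edge of $G$ is in exactly two cycles of the combined list. Empty sets $F_s$ contribute no cycles and cause no harm. This is a cycle double cover.

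There is no real obstacle here. The only subtlety is the looplessness assumption: it makes "incident edges of $F_s$ at $v$" the same as the degree in $F_s$, which is what the Eulerian condition requires. It also ensures that every cycle appearing in a decomposition is a genuine cycle of $G$.
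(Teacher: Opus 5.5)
Your proof is correct and is essentially the paper's argument: the paper also defines $H_s=\{e\in E(G): s\in P_e\}$, notes that each $H_s$ is Eulerian and that each edge lies in exactly two of them, and relies on the introduction's remark that Eulerian subgraphs decompose into edge-disjoint cycles. Your closing claim that looplessness is needed for the cycles of the decomposition to be genuine cycles is unnecessary, since under the paper's definition a loop is itself a $2$-regular connected subgraph; looplessness matters only for identifying $\abs{F_s\cap\delta(v)}$ with the degree of $v$ in $F_s$.
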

\begin{proof}
    For each $s\in\Gamma$, let
    \[
        H_s=\{e\in E(G):s\in P_e\}.
    \]
    Every vertex has even degree in $H_s$, so $H_s$ is Eulerian. Every edge belongs to exactly two of the graphs $H_s$, since
    $P_e$ has two elements. Thus all the $H_s$ form a cycle double cover of $G$.
\end{proof}

Remember that for a subspace $W'$ of a vector space $W$ and an element $x$ of $W$, 
we have $x+W'=\{x+v:v\in W'\}$.
For a vector $x$ in $W$, let $\langle x\rangle$ denote the span of $x$.
Let $\mathbb F_2=\{0,1\}$ be a field with two elements.

\begin{lemma}[Flow lifting]\label{lem:flow-lifting}
    Let $G$ be a cubic loopless graph. 
    Let $\Gamma=\mathbb F_2^3$.
    Let $\phi:E(G)\to\Gamma$ be a function such that 
    $\phi(e)\neq0$ for every edge $e$ of~$G$ 
    and $\sum_{e\in \delta(v)}\phi(e)=0$ for every vertex~$v$ of~$G$.
    If there are $t_v\in \Gamma$ for all $v\in V(G)$ such that 
    for every edge $e=uv$ and all
    $f_u\in\delta(u)\setminus\{e\}$ and $f_v\in\delta(v)\setminus\{e\}$,
    \[ t_u + t_v \in   (\phi(f_u)+\phi(f_v)) + \langle \phi(e)\rangle, \] 
    then $G$ has a cycle double cover.
\end{lemma}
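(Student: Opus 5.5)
The plan is to apply \zcref{lem:two-element-edge-labels} with $\Gamma=\mathbb F_2^3$. For each edge $e=uv$ I take $P_e$ to be a coset of the line $\langle\phi(e)\rangle$, namely
\[
  P_e = t_u+\phi(f_u)+\langle\phi(e)\rangle,\qquad f_u\in\delta(u)\setminus\{e\}.
\]
Since $\phi(e)\neq0$, the subspace $\langle\phi(e)\rangle$ has exactly two elements, so $P_e$ is a two-element subset of $\Gamma$.

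The first step, and the only delicate one, is to check that $P_e$ is well defined. This needs two independent facts.

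\emph{Independence of $f_u$.} Let the edges at $u$ be $e,f,g$; since $G$ is loopless and cubic there are exactly three. From $\phi(e)+\phi(f)+\phi(g)=0$ we get $\phi(f)-\phi(g)=\phi(e)\in\langle\phi(e)\rangle$. Hence $\phi(f)+\langle\phi(e)\rangle=\phi(g)+\langle\phi(e)\rangle$, and the coset does not depend on which $f_u$ is chosen.

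\emph{Independence of the endpoint.} The hypothesis $t_u+t_v\in\phi(f_u)+\phi(f_v)+\langle\phi(e)\rangle$ rearranges, since we are in characteristic $2$, to
\[
  t_u+\phi(f_u)+\langle\phi(e)\rangle=t_v+\phi(f_v)+\langle\phi(e)\rangle .
\]
So the definition is symmetric in $u$ and $v$. Parallel edges cause no trouble, because each edge is handled separately.

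Concretely, if the edges at $u$ are $e,f,g$ with $a=\phi(e)$, $b=\phi(f)$, $c=\phi(g)$, then
\[
  P_e=\{t_u+b,\;t_u+c\},\qquad P_f=\{t_u+a,\;t_u+c\},\qquad P_g=\{t_u+a,\;t_u+b\}.
\]

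The second step is the parity condition at $u$. The vectors $a,b,c$ are pairwise distinct: for example $a+b=c\neq0$, and the other pairs are handled the same way. Therefore:
\begin{itemize}
  \item $t_u+a$ lies in exactly the two sets $P_f$ and $P_g$;
  \item $t_u+b$ lies in exactly $P_e$ and $P_g$;
  \item $t_u+c$ lies in exactly $P_e$ and $P_f$;
  \item every other $s\in\Gamma$ lies in none of them.
\end{itemize}
Hence $\abs{\{e'\in\delta(u):s\in P_{e'}\}}$ is $0$ or $2$ for every $s\in\Gamma$, which is even.

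\zcref{lem:two-element-edge-labels} then gives a cycle double cover of $G$. I expect the main obstacle to be the well-definedness step: one must see that the cubic flow condition makes the coset at each end canonical, and that the hypothesis on the $t_v$ is exactly what makes the two ends agree.
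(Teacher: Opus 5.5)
Your proposal is correct and follows the paper's proof: you define $P_e=t_u+\phi(f_u)+\langle\phi(e)\rangle$, check that it does not depend on the choice of $f_u$ or of the endpoint, list the three labels at a vertex, and apply the two-element edge labels lemma. One step the paper leaves to the reader is the check that $\phi(e),\phi(f),\phi(g)$ are pairwise distinct, so that each of $t_u+a$, $t_u+b$, $t_u+c$ lies in exactly two labels at $u$; you verify it explicitly.
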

\begin{proof}
    Note that if $e,f,g\in\delta(v)$ are distinct edges incident with $v$, then 
    $\phi(e)+\phi(f)+\phi(g)=0$ and therefore 
    $\phi(f)+\langle \phi(e)\rangle = \phi(g)+\langle \phi(e)\rangle$. 
    Thus the choice of $f_u$ or $f_v$ does not change the value.

    For each edge $e=uv$ of $G$, 
    let $P_e= t_u+ \phi (f_u)+\langle \phi(e)\rangle$. 
    By the choice of $t_v$, we have  
    $P_e=t_v +\phi(f_v)+\langle \phi(e)\rangle$
    and therefore $P_e$ is well defined.
    Furthermore, $\abs{P_e}=2$.

    Observe that 
    for a vertex $v$ with three incident edges $e$, $f$, $g$, 
    \begin{align*}
        P_e&=\{ t_v + \phi(f), t_v+\phi(g)\}, \\
        P_f&=\{ t_v + \phi(e), t_v+\phi(g)\}, \\
        P_g&=\{ t_v +\phi(e), t_v +\phi(f)\}.
    \end{align*}
    Now, we apply \zcref{lem:two-element-edge-labels}.
\end{proof}

\section{Some linear algebra}

For an $m\times n$ matrix $A$ over a field $\mathbb F$, 
we write $\mathcal C(A)=\{Ax\in \mathbb F^m: x\in \mathbb F^n\}$ for the \emph{column space}
and $\ker(A^{\mathsf T})=\{y\in \mathbb F^m: A^{\mathsf T}y=0\}$ for the \emph{left nullspace}.

\begin{lemma}[Column space and left nullspace]\label{lem:column-left-nullspace}
    Let $\mathbb{F}$ be a field and let $A$ be an $m\times n$ matrix over $\mathbb F$. With respect to
    the standard dot product on $\mathbb{F}^m$,
    \[
        \mathcal{C}(A)=\ker(A^{\mathsf T})^\perp.
    \]
\end{lemma}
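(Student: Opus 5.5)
We prove the two inclusions separately, using only the definitions together with the rank--nullity theorem. Over an arbitrary field, and $\mathbb F_2$ in particular, the dot product can be degenerate on subspaces: a nonzero vector can be orthogonal to itself. So we avoid any argument based on orthogonal projections or on $W\cap W^\perp=0$. The only fact we need is that for any subspace $W\subseteq\mathbb F^m$,
\[
\dim W^\perp = m-\dim W .
\]
To see this, pick a basis $w_1,\ldots,w_k$ of $W$ and let $B$ be the $k\times m$ matrix with these vectors as rows. Then $W^\perp=\ker B$. Since the rows of $B$ are independent, $B$ has rank $k$, and rank--nullity gives $\dim\ker B=m-k$.

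\emph{Easy inclusion.} Take $Ax\in\mathcal C(A)$ and $y\in\ker(A^{\mathsf T})$. Then
\[
y\cdot Ax = y^{\mathsf T}Ax = (A^{\mathsf T}y)^{\mathsf T}x = 0 .
\]
Hence $\mathcal C(A)\subseteq \ker(A^{\mathsf T})^\perp$.

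\emph{Reverse inclusion by dimensions.} Let $r$ be the rank of $A$, so $\dim\mathcal C(A)=r$. The matrix $A^{\mathsf T}$ also has rank $r$, since row rank equals column rank. Rank--nullity applied to $A^{\mathsf T}\colon\mathbb F^m\to\mathbb F^n$ gives $\dim\ker(A^{\mathsf T})=m-r$. By the dimension fact above,
\[
\dim \ker(A^{\mathsf T})^\perp = m-(m-r) = r .
\]
A subspace contained in another subspace of the same finite dimension equals it, so the inclusion from the previous step is an equality.

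The main obstacle is the dimension formula for $W^\perp$. It must be justified without the inner-product-space intuition that fails over $\mathbb F_2$, and the argument via the matrix $B$ whose rows form a basis of $W$ handles this cleanly. The other ingredient, that $A$ and $A^{\mathsf T}$ have the same rank, is standard linear algebra and can be quoted.
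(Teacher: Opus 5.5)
Your proof is correct and follows the same route as the paper. You first show $\mathcal{C}(A)\subseteq\ker(A^{\mathsf T})^\perp$ from $y^{\mathsf T}A=0$, then count dimensions to see that both sides have dimension $\operatorname{rank}A$; your explicit justification of $\dim W^\perp=m-\dim W$ over an arbitrary field, via $W^\perp=\ker B$, fills in a step the paper leaves implicit.
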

\begin{proof}
    If $\mathbf{y}\in\ker(A^{\mathsf T})$, then $\mathbf{y}^{\mathsf T}A=0$, so
    $\mathbf{y}$ is orthogonal to every column of $A$. Thus
    $\mathcal{C}(A)\subseteq\ker(A^{\mathsf T})^\perp$. The two spaces have the same
    dimension, namely $\operatorname{rank}A$, and hence they are equal.
\end{proof}

\begin{proposition}[Elementary consistency criterion]\label{lem:elementary-consistency}
    Let $\mathbb{F}$ be a field, let $n\ge2$ be an integer, and put
    $W=\mathbb{F}^n$ with the standard dot product. Let $G$ be a graph.
    For every edge $e$, let $p_e$ be a nonzero vector in $W$
    and let $d_e\in W$.
    Then the following are equivalent.
    \begin{enumerate}[label=\textup{(\roman*)}]
        \item There are vectors $t_v\in W$ such that
        \[
            t_u+t_v\in d_e+\langle p_e\rangle
            \qquad\text{for all }e=uv.
        \]
        \item For every family $(h_e)_{e\in E(G)}$ of vectors in $W$, if 
        \[
            h_e\mathbin{\cdot}p_e=0\quad\text{for all $e\in E(G)$}
            \qquad
            \text{ and }
            \qquad
            \sum_{e\in\delta(v)}h_e=0\quad\text{for all $v\in V(G)$},
            \] 
        then 
        \(
            \sum_{e\in E(G)}h_e\mathbin{\cdot}d_e=0
        \).
    \end{enumerate}
\end{proposition}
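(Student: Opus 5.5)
We will set the statement up as a single linear system and apply \zcref{lem:column-left-nullspace}. Condition (i) says there are $t_v\in W$ and scalars $\lambda_e\in\mathbb F$ with $t_u+t_v+\lambda_e p_e=d_e$ for every edge $e=uv$. Here we should read $t_u+t_v$ as the incidence sum over the ends of $e$. Over $\mathbb F_2$, which is the case used later, this is exactly right. For a general field the same bookkeeping works with the unsigned incidence matrix, and a loop at $v$ contributes $2t_v$.

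Consider the linear map
\[
L:\ W^{V(G)}\times\mathbb F^{E(G)}\to W^{E(G)},\qquad L\bigl((t_v)_v,(\lambda_e)_e\bigr)=\bigl(t_u+t_v+\lambda_e p_e\bigr)_{e=uv}.
\]
Condition (i) says precisely that $d=(d_e)_e$ lies in the image of $L$. The image of $L$ is the column space of a matrix $A$. Here $W^{E(G)}$ is identified with $\mathbb F^{n\abs{E(G)}}$ and carries the dot product $\sum_e h_e\cdot d_e$. By \zcref{lem:column-left-nullspace}, $d\in\mathcal C(A)$ if and only if $d$ is orthogonal to every $h=(h_e)_e$ in $\ker(A^{\mathsf T})$.

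The key step is to identify $\ker(A^{\mathsf T})$. The vector $h$ lies in the left nullspace exactly when $h$ is orthogonal to every column of $A$, that is, to $L$ applied to each standard basis vector of the domain.

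Take first the basis vector $\lambda_e=1$ with everything else zero. Its image is $p_e$ in coordinate $e$ and zero elsewhere, so orthogonality gives $h_e\cdot p_e=0$.

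Next take the basis vector with $t_v=x$, where $x$ ranges over the standard basis of $W$, and everything else zero. Its image is $x$ in each coordinate $e\in\delta(v)$, with multiplicity, and zero elsewhere. Orthogonality gives $x\cdot\sum_{e\in\delta(v)}h_e=0$ for all basis vectors $x$, which means $\sum_{e\in\delta(v)}h_e=0$.

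So $\ker(A^{\mathsf T})$ is exactly the set of families described in (ii). Hence (ii) is the statement $d\perp\ker(A^{\mathsf T})$, and this is equivalent to (i).

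The main obstacle is bookkeeping rather than ideas. One has to write $L$ honestly as a matrix, so that the dot product on $W^{E(G)}$ is the standard one and \zcref{lem:column-left-nullspace} applies verbatim. One also has to handle loops and parallel edges consistently in the definition of $\delta(v)$ and in the incidence sum. The hypothesis $n\ge2$ and $p_e\ne0$ are not needed for the equivalence itself; they only matter for later use.
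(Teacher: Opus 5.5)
Your argument is correct, but it sets up the linear system differently from the paper.

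The paper eliminates the scalars $\lambda_e$. It chooses a basis $r_{e,1},\ldots,r_{e,n-1}$ of $\langle p_e\rangle^\perp$ and uses $(\langle p_e\rangle^\perp)^\perp=\langle p_e\rangle$ to rewrite (i) as $R_e(t_u+t_v)=R_ed_e$, so the only unknowns are the $t_v$. It then proves (i)$\Rightarrow$(ii) by a direct summation. It proves (ii)$\Rightarrow$(i) via \zcref{lem:column-left-nullspace}, sending each $\mathbf y\in\ker(A^{\mathsf T})$ to the admissible family $h_e=R_e^{\mathsf T}y_e$; only this one inclusion is needed there.

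You instead keep the $\lambda_e$ as additional unknowns. The left nullspace of your augmented matrix is then \emph{exactly} the set of families in the hypothesis of (ii). So both implications come out of \zcref{lem:column-left-nullspace} at once, with no basis choice and no double orthogonal complement. As you observe, this also makes the hypotheses $p_e\neq0$ and $n\ge2$ unnecessary; in the paper, $p_e\neq0$ is what gives $\langle p_e\rangle^\perp$ dimension $n-1$.

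The paper's version buys a smaller system in the $t_v$ alone and a transparent proof of the easy direction. Yours is shorter and slightly more general.

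Your explicit convention for loops (counted twice in $\delta(v)$, contributing $2t_v$) keeps the two conditions consistent. The paper does not address this, since it applies the proposition only to loopless cubic graphs.
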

\begin{proof}
    Suppose first that \textup{(i)} holds, and let $(h_e)_{e\in E(G)}$ satisfy the two
    hypotheses in \textup{(ii)}. For each edge $e=uv$, choose $\lambda_e\in\mathbb{F}$
    such that
    \[
        t_u+t_v=d_e+\lambda_e p_e.
    \]
    Since $h_e\mathbin{\cdot}p_e=0$, we have
    \[
        \sum_{e\in E(G)}h_e\mathbin{\cdot}d_e
        =\sum_{e=uv\in E(G)}h_e\mathbin{\cdot}(t_u+t_v)
        =\sum_{v\in V(G)}
          \left(\sum_{e\in\delta(v)}h_e\right)\mathbin{\cdot}t_v
        =0.
    \]
    Thus \textup{(ii)} holds.

    Conversely, suppose that \textup{(ii)} holds. For each edge $e$, choose a basis
    $r_{e,1},\ldots,r_{e,n-1}$ of
    $\langle p_e\rangle^\perp=\{h\in W:h\mathbin{\cdot}p_e=0\}$.
    Let $R_e$ be the $(n-1)\times n$ matrix whose $i$-th row is $r_{e,i}^{\mathsf T}$.
    Since $(\langle p_e\rangle^\perp)^\perp=\langle p_e\rangle$, condition \textup{(i)} is equivalent to
    \[  t_u + t_v - d_e \in \langle r_{e,1},r_{e,2},\ldots,r_{e,n-1}\rangle^\perp \qquad\text{for all }e=uv,\]
    which is equivalent to 
    \[
        R_e(t_u+t_v)=R_ed_e\qquad\text{for all }e=uv.
    \]

    Form the block matrix
    \[
        A\in\mathbb{F}^{(n-1)\abs{E(G)}\times n\abs{V(G)}}
    \]
    with block rows indexed by $e\in E(G)$ and block columns indexed by $v\in V(G)$, where
    \[
        A_{e,v}=\begin{cases}
            R_e&\text{if }e\in\delta(v),\\
            0&\text{otherwise}.
        \end{cases}
    \]
    If $\mathbf{t}=(t_v)_{v\in V(G)}$ and
    $\mathbf{b}=(R_ed_e)_{e\in E(G)}$, then condition \textup{(i)} is precisely the
    solvability of
    \[
        A\mathbf{t}=\mathbf{b}.
    \]

    By \zcref{lem:column-left-nullspace}, it remains to show that $\mathbf{b}$ is
    orthogonal to the left nullspace $\ker(A^{\mathsf T})$. Take
    $\mathbf{y}=(y_e)_{e\in E(G)}\in\ker(A^{\mathsf T})$, where
    $y_e\in\mathbb{F}^{n-1}$, and put
    \[
        h_e=R_e^{\mathsf T}y_e.
    \]
    Then $h_e\in \langle p_e\rangle^\perp$, and the block of $A^{\mathsf T}\mathbf{y}$ indexed by a
    vertex $v$ is $\sum_{e\in\delta(v)}h_e$. Hence
    $\sum_{e\in\delta(v)}h_e=0$ for every vertex $v$. By \textup{(ii)},
    \[
        \mathbf{y}^{\mathsf T}\mathbf{b}
        =\sum_{e\in E(G)}y_e^{\mathsf T}R_ed_e
        =\sum_{e\in E(G)}h_e\mathbin{\cdot}d_e
        =0.
    \]
    Thus $\mathbf{b}\in\ker(A^{\mathsf T})^\perp=\mathcal{C}(A)$. Equivalently,
    $A\mathbf{t}=\mathbf{b}$ has a solution, and hence \textup{(i)} holds.
\end{proof}
\section{Proof for the cubic $3$-edge-connected graphs}

\begin{proposition}\label{prop:cubic-cdc}
    Let $G$ be a cubic $3$-edge-connected graph.
    Then $G$ has a cycle double cover.
\end{proposition}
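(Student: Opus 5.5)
The plan is to combine \zcref{lem:jaeger-flow}, \zcref{lem:flow-lifting} and \zcref{lem:elementary-consistency}. Since $G$ is cubic and $3$-edge-connected, it is loopless: a loop at $v$ would leave a single other edge at $v$, and that edge would be a bridge. By \zcref{lem:jaeger-flow}, fix a nowhere-zero $\mathbb F_2^3$-flow $\phi$. At each vertex $v$ with edges $e,f,g$ we have $\phi(e)+\phi(f)+\phi(g)=0$, so the three values are distinct and nonzero, and any two of them are linearly independent. By \zcref{lem:flow-lifting}, it suffices to find $t_v\in\mathbb F_2^3$ with
\[
t_u+t_v\in d_e+\langle p_e\rangle\quad\text{for every edge } e=uv,
\]
where $p_e=\phi(e)$ and $d_e=\phi(f_u)+\phi(f_v)$. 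As already observed in the proof of \zcref{lem:flow-lifting}, the class of $d_e$ modulo $\langle p_e\rangle$ does not depend on the choice of $f_u,f_v$. Applying \zcref{lem:elementary-consistency} with $\mathbb F=\mathbb F_2$ and $n=3$, it remains to verify condition (ii). So take $(h_e)$ with $h_e\cdot\phi(e)=0$ for all $e$ and $\sum_{e\in\delta(v)}h_e=0$ for all $v$; we must show $\sum_e h_e\cdot d_e=0$.

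The first step is to regroup the sum by vertices. Each edge $e=uv$ contributes $h_e\cdot\phi(f_u)$ at $u$ and $h_e\cdot\phi(f_v)$ at $v$, and each of these is well defined because $h_e\perp\phi(e)$. Hence
\[
\sum_e h_e\cdot d_e=\sum_v S_v,
\]
where for $\delta(v)=\{e,f,g\}$ we set $S_v=h_e\cdot\phi(f)+h_f\cdot\phi(g)+h_g\cdot\phi(e)$. Using $h_g=h_e+h_f$ and $h_f\cdot\phi(f)=0$, this simplifies to $S_v=h_e\cdot\phi(f)$. A similar computation shows $S_v=h_f\cdot\phi(g)=h_g\cdot\phi(e)$, so $S_v$ is a symmetric local quantity at $v$. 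However, $S_v$ need not vanish vertex by vertex, so a global argument is required.

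My plan for the global step is to exhibit an \emph{edge potential}: a function $\gamma(h,p)\in\mathbb F_2$, defined for $p\ne0$ and $h\perp p$, such that at every vertex
\[
S_v=\gamma(h_e,\phi(e))+\gamma(h_f,\phi(f))+\gamma(h_g,\phi(g)).
\]
Summing over all vertices then counts each edge term twice, which gives $\sum_v S_v=0$ in characteristic $2$. The natural first candidates are quadratic forms in $h$ whose polarization reproduces the mixed terms. One is $\gamma(h,p)=Q(h)$ with $Q(x)=\sum_{i<j}x_ix_j$ or $Q(x)=\sum_i x_i$. For these,
\[
\gamma(h_e)+\gamma(h_f)+\gamma(h_e+h_f)
\]
is a bilinear expression in $h_e$ and $h_f$, to be compared with $h_e\cdot\phi(f)$. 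Another candidate is a $p$-dependent choice such as the indicator that $h\ne0$, or $\gamma(h,p)=h\cdot c(p)$ for a fixed map $c$. If no simple formula works, I would decide the existence of $\gamma$ by a finite check over the local configurations. These are triples of distinct nonzero $\phi$-values summing to $0$, together with pairs $(h_e,h_f)$ satisfying $h_e\perp\phi(e)$, $h_f\perp\phi(f)$ and $h_e+h_f\perp\phi(g)$. The check can be reduced using the permutation symmetry of the coordinates, which preserves the standard dot product.

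I expect this last step to be the main obstacle. Condition (ii) is a genuinely global parity statement, and finding the right edge potential, or an alternative global identity, is where the actual content lies. The other steps are bookkeeping with the earlier lemmas. If no $\phi$-independent potential exists, the fallback is to exploit the specific structure of $\phi$ coming from three spanning trees in \zcref{lem:jaeger-flow}, for example by rechoosing the $F_i$ to control the local configurations. Once (ii) is established, \zcref{lem:elementary-consistency} gives the $t_v$, and \zcref{lem:flow-lifting} gives the cycle double cover.
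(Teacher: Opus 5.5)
Your reduction matches the paper's. The regrouping $\sum_e h_e\cdot d_e=\sum_v S_v$ and the identity $S_v=h_e\cdot\phi(f)=h_f\cdot\phi(g)=h_g\cdot\phi(e)$ are exactly the paper's quantity $\lambda$, and the paper also finishes with an edge potential and double counting. However, the proposal stops at the step that carries the content. You never commit to a $\gamma$ and verify the local identity; instead you offer a list of candidates, a finite search you do not perform, and a fallback of re-choosing $\phi$. So condition (ii) of \zcref{lem:elementary-consistency} is not established.

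Nor is the step routine with the candidates you list first. $Q(x)=\sum_i x_i$ always gives $\sum_{e'\in\delta(v)}Q(h_{e'})=0$, because the $h_{e'}$ sum to zero. Now take a vertex with $\phi(e)=(1,0,0)$, $\phi(f)=(0,1,0)$, $\phi(g)=(1,1,0)$ and $h_e=(0,1,1)$, $h_f=(1,0,0)$, $h_g=(1,1,1)$, which satisfy all local constraints. There $S_v=1$, but both $Q(x)=\sum_i x_i$ and $Q(x)=\sum_{i<j}x_ix_j$ give a sum of $0$.

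The missing idea is that the indicator you mention in passing works: $\gamma(h,p)=\mathbf{1}_{h\neq0}$, which does not even depend on $p$. Its verification uses only facts you already derived.
\begin{itemize}
\item If $S_v=1$, then $h_e\cdot\phi(f)=h_f\cdot\phi(g)=h_g\cdot\phi(e)=1$. So $h_e,h_f,h_g$ are all nonzero, and $\sum_{e'\in\delta(v)}\mathbf{1}_{h_{e'}\neq0}=3\equiv1$.
\item If $S_v=0$, then $h_e$ is orthogonal to $\phi(e),\phi(f)$, $h_f$ to $\phi(f),\phi(g)$, and $h_g$ to $\phi(g),\phi(e)$. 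Any two of the three flow values span the same $2$-dimensional space $U$, so all three vectors lie in the $1$-dimensional space $U^\perp$. Since $h_e+h_f+h_g=0$, an even number of them are nonzero, and the indicator sum is $0$.
\end{itemize}
Hence $S_v=\sum_{e'\in\delta(v)}\mathbf{1}_{h_{e'}\neq0}$. Summing over $v$ counts every edge with $h_{e'}\neq0$ once at each end, so $\sum_e h_e\cdot d_e=0$ in $\mathbb{F}_2$. This is exactly the paper's argument, and no further structure of $\phi$ from the spanning trees is needed.
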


\begin{proof}[Proof of \zcref{prop:cubic-cdc}]
    Put $\Gamma=\mathbb{F}_2^3$.
    \zcref{lem:jaeger-flow} yields a nowhere-zero
    $\Gamma$-flow $\phi:E(G)\to\Gamma$.

    If $v$ is an end of an edge $e$, choose either edge
    $f\in\delta(v)\setminus\{e\}$ and put $c_{v,e}=\phi(f)$.
    The two choices differ by $\phi(e)$, so the coset
    $c_{v,e}+\langle\phi(e)\rangle$ is independent of the choice.

    For every edge $e=uv$, put $d_e=c_{u,e}+c_{v,e}$.
    By  \zcref{lem:flow-lifting}, it is enough to show that 
    there are $t_v\in\Gamma$ for all $v\in V(G)$ such that
    \[
        t_u+t_v\in d_e+\langle\phi(e)\rangle
        \qquad\text{for all }e=uv.
        \tag{1}\label{eq:compatibility}
    \]
We apply \zcref{lem:elementary-consistency} with
    $\mathbb{F}=\mathbb{F}_2$, $n=3$, $W=\Gamma$, and $p_e=\phi(e)$.
    Consider any family
    $(h_e)_{e\in E(G)}\in\Gamma^{E(G)}$ satisfying
    \[
        h_e\mathbin{\cdot}\phi(e)=0\quad\text{for all }e\in E(G),
        \qquad
        \sum_{e\in\delta(v)}h_e=0\quad\text{for all }v\in V(G).
        \tag{2}\label{eq:edge-dependency}
    \]
    It remains to prove that $\sum_e h_e\mathbin{\cdot}d_e=0$.

    Fix a vertex $v$ and write $\delta(v)=\{e_1,e_2,e_3\}$.
    The three values $\phi(e_i)$ are nonzero and pairwise distinct, and
    \[
        \phi(e_1)+\phi(e_2)+\phi(e_3)=0,
        \qquad h_{e_1}+h_{e_2}+h_{e_3}=0.
    \]
    Put $\lambda=h_{e_2}\mathbin{\cdot}\phi(e_1)$. Using
    $h_{e_i}\mathbin{\cdot}\phi(e_i)=0$ in the two displayed identities gives
    \begin{align*}
        h_{e_1}\mathbin{\cdot}(\phi(e_1)+\phi(e_2)+\phi(e_3))&=h_{e_1}\mathbin{\cdot}\phi(e_2) + h_{e_1}\mathbin{\cdot}\phi(e_3)=0, \\ 
        h_{e_2}\mathbin{\cdot}(\phi(e_1)+\phi(e_2)+\phi(e_3))&=h_{e_2}\mathbin{\cdot}\phi(e_1) + h_{e_2}\mathbin{\cdot}\phi(e_3)=0, \\ 
        h_{e_3}\mathbin{\cdot}(\phi(e_1)+\phi(e_2)+\phi(e_3))&=h_{e_3}\mathbin{\cdot}\phi(e_1) + h_{e_3}\mathbin{\cdot}\phi(e_2)=0, \\ 
        (h_{e_1}+h_{e_2}+h_{e_3})\mathbin{\cdot}\phi(e_1)
        &= h_{e_2}\mathbin{\cdot}\phi(e_1) + h_{e_3}\mathbin{\cdot}\phi(e_1)=0, \\ 
        (h_{e_1}+h_{e_2}+h_{e_3})\mathbin{\cdot}\phi(e_2)&=h_{e_1}\mathbin{\cdot}\phi(e_2) + h_{e_3}\mathbin{\cdot}\phi(e_2)=0, \\ 
        (h_{e_1}+h_{e_2}+h_{e_3})\mathbin{\cdot}\phi(e_3)&=h_{e_1}\mathbin{\cdot}\phi(e_3) + h_{e_2}\mathbin{\cdot}\phi(e_3)=0, 
    \end{align*}
    and therefore 
    \[
        \lambda = h_{e_2}\mathbin{\cdot}\phi(e_1) 
        = h_{e_2}\mathbin{\cdot}\phi(e_3) 
        = h_{e_1}\mathbin{\cdot}\phi(e_3)
        = h_{e_1}\mathbin{\cdot}\phi(e_2)
        = h_{e_3}\mathbin{\cdot}\phi(e_2) 
        = h_{e_3}\mathbin{\cdot}\phi(e_1).
    \]
    If $\lambda=1$, these equalities show that all three $h_{e_i}$ are nonzero.
    If $\lambda=0$, all three lie in the one-dimensional space
    $\langle\phi(e_1),\phi(e_2),\phi(e_3)\rangle^\perp$, and their sum is zero, so an even
    number of them are nonzero. Thus
    \[
        \lambda=\sum_{e\in\delta(v)}\mathbf{1}_{h_e\ne0}.
    \]
    Here, $\mathbf{1}_{h_e\ne0}$ is a function that has value $1$ if $h_e\neq0$ and $0$ otherwise.
    Because changing the choice of $c_{v,e}$ adds $\phi(e)$ and
    $h_e\mathbin{\cdot}\phi(e)=0$, we may compute its dot product using either choice.
    Hence
    \[
        \sum_{e\in\delta(v)}h_e\mathbin{\cdot}c_{v,e}
        =h_{e_1}\mathbin{\cdot}\phi(e_2)
         +h_{e_2}\mathbin{\cdot}\phi(e_1)
         +h_{e_3}\mathbin{\cdot}\phi(e_1)
        =\lambda
        =\sum_{e\in\delta(v)}\mathbf{1}_{h_e\ne0}.
        \tag{3}\label{eq:local-sum}
    \]

    Since $d_e=c_{u,e}+c_{v,e}$, summing \eqref{eq:local-sum} over all vertices gives
    \[
        \sum_{e\in E(G)}h_e\mathbin{\cdot}d_e
        =\sum_{v\in V(G)}\sum_{e\in\delta(v)}h_e\mathbin{\cdot}c_{v,e}
        =\sum_{v\in V(G)}\sum_{e\in\delta(v)}\mathbf{1}_{h_e\ne0}=0.
    \]
    The last equality holds in $\mathbb{F}_2$, because every edge with $h_e\ne0$
    is counted at both ends. By \zcref{lem:elementary-consistency}, the required
    $(t_v)_{v\in V(G)}$ exist. This proves the claim.
\end{proof}

\section{Completing the proof}

\getkeytheorem{cdc}

\begin{proof}
    By \zcref{prop:min}, it is enough to prove it for cubic $3$-edge-connected graphs.
    \zcref{prop:cubic-cdc} gives the final contradiction.
\end{proof}

\section{More conjectures}
\subsection{Small cycle double covers}

In 1980, Bondy~\cite{Bondy1990a} proposed the following strengthening of the cycle double cover
conjecture.
\begin{conjecture}[Bondy~\cite{Bondy1990a}]\label{conj:small-cdc}
    Every simple $2$-edge-connected graph on $n$ vertices has a cycle double
    cover consisting of at most $n-1$ cycles.
\end{conjecture}
The bound would be best possible. Indeed, in a cycle double cover of $K_n$,
the $2(n-1)$ occurrences of edges incident with a fixed vertex must be paired
into $n-1$ occurrences of cycles through that vertex.
\zcref{conj:small-cdc} does not follow immediately from the
method of this paper.

In the same paper,
Bondy~\cite{Bondy1990a} also conjectured that every simple $2$-connected cubic graph on~$n$
vertices other than~$K_4$ has a cycle double cover consisting of at most
$n/2$ cycles. This now follows from \zcref{thm:cdc}
because Lai, Yu, and Zhang~\cite{LYZ1994} proved that it is true if the graph has a cycle double cover.
\begin{corollary}[Lai, Yu, and Zhang~\cite{LYZ1994}]\label{cor:small-cubic-cdc}
    Every simple $2$-connected cubic graph on $n$ vertices other than $K_4$
    has a cycle double cover consisting of at most $n/2$ cycles.
\end{corollary}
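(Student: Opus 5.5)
The corollary is a direct combination of \zcref{thm:cdc} with the theorem of Lai, Yu, and Zhang~\cite{LYZ1994}. That theorem says that if a simple $2$-connected cubic graph $G$ on $n$ vertices, other than $K_4$, has a cycle double cover, then it has one with at most $n/2$ cycles. So the plan is to check that $G$ meets the hypothesis of \zcref{thm:cdc} and then apply their result as a black box.

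First, a $2$-connected graph has no cut vertex. A cubic graph has at least four vertices, so neither end of an edge has degree $1$. An edge $uv$ whose deletion disconnects the graph would therefore have an endpoint with other neighbours on its side, and that endpoint would be a cut vertex. Hence every $2$-connected cubic graph is bridgeless. By \zcref{thm:cdc}, $G$ has a cycle double cover.

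Second, we invoke Lai, Yu, and Zhang's theorem for $G$, which is simple, $2$-connected, cubic and not $K_4$. This yields a cycle double cover with at most $n/2$ cycles.

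The only possible obstacle is matching the precise form of their hypothesis. Their result is usually stated under the assumption that a cycle double cover exists, possibly phrased as ``if the cycle double cover conjecture holds for this graph''. Either phrasing is supplied exactly by \zcref{thm:cdc}, so no further argument is needed.
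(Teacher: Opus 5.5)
Your proposal is correct and is exactly the paper's argument: the corollary follows by feeding the cycle double cover supplied by \zcref{thm:cdc} into the Lai--Yu--Zhang theorem, which assumes the graph already has a cycle double cover. Your check that a simple $2$-connected cubic graph is bridgeless is left implicit in the paper; your ``at least four vertices'' remark uses simplicity, which the statement assumes.
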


The exception is necessary: the three Hamilton cycles of $K_4$ form a cycle
double cover, whereas no two cycles can cover its six edges twice.

\subsection{$5$-cycle double cover conjecture}

A \emph{$k$-cycle double cover ($k$-CDC)} is a collection of at most $k$ Eulerian subgraphs such that every edge is in exactly two of them.

The proof of \zcref{thm:cdc} also yields the following.
\begin{theorem}\label{thm:8-cycle-double-cover}
    Every bridgeless graph has an $8$-cycle double cover.
\end{theorem}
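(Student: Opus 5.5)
The plan is to rerun the whole argument with ``cycle double cover'' replaced by ``$8$-cycle double cover'' throughout, that is, a list of at most eight Eulerian subgraphs in which every edge lies in exactly two. The proof has two parts: the cubic $3$-edge-connected case, which is essentially free, and the reduction of \zcref{prop:min}, which must be checked to preserve the bound of eight.

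\emph{Cubic case.} Let $G$ be cubic and $3$-edge-connected. The proof of \zcref{prop:cubic-cdc} produces labels $P_e\subseteq\Gamma=\mathbb{F}_2^3$ via \zcref{lem:flow-lifting}. \zcref{lem:two-element-edge-labels} then yields the Eulerian subgraphs $H_s$, one for each $s\in\Gamma$. Since $\abs{\Gamma}=8$, the family $(H_s)_{s\in\Gamma}$ is already an $8$-cycle double cover. We simply keep it as a list of Eulerian subgraphs and do not decompose it into cycles.

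\emph{Reduction.} Take a counterexample $G$ to \zcref{thm:8-cycle-double-cover} with $\abs{E(G)}$ minimum. I will redo each step of \zcref{prop:min}, working with Eulerian subgraphs indexed by a fixed set $\{1,\dots,8\}$, so that covers can be combined index by index.
\begin{enumerate}[label=\textup{(\alph*)}]
\item \emph{Loops.} A loop contributes $2$ to the degree of its vertex. Deleting it, covering the rest, and adding the loop to any two of the subgraphs keeps all of them Eulerian.
\item \emph{Components.} For each $i$, take the union of the $i$-th subgraphs over all components. This gives at most eight subgraphs.
\item \emph{$2$-edge-cuts.} Suppose $\{e_1,e_2\}$ is a $2$-edge-cut with shores $X$ and $Y$. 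Take an $8$-cycle double cover $(H_i)$ of $G/e_1$ and lift each $H_i$ to $G$ by adding $e_1$ exactly when $e_2\in H_i$. This keeps every edge covered exactly twice. For parity, note that every vertex other than the two ends $x\in X$, $y\in Y$ of $e_1$ keeps its degree. The sum of the degrees over $X$ in the lifted subgraph is even, because it equals twice the number of edges inside $X$ plus the number of cut edges used, which is $0$ or $2$. Hence the degree of $x$ is even, and likewise for $y$. The case where $e_2$ becomes a loop is handled in the same way.
\item \emph{Vertices of degree at least four.} Apply \zcref{lem:fleischner-splitting} to obtain $H$, and replace the new edge $v_1v_2$ in each of the eight subgraphs by the path $v_1vv_2$. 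This adds $2$ to the degree of $v$, so all subgraphs stay Eulerian.
\end{enumerate}
These steps show that $G$ is cubic and $3$-edge-connected, which contradicts the cubic case.

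The only real point is the reduction, and within it the $2$-edge-cut lifting. There we can no longer speak of ``the cycle through $w$''; we have to argue by parity for arbitrary Eulerian subgraphs, as above. Everything else is a direct transcription of the earlier proof, with the count eight coming from $\abs{\mathbb{F}_2^3}=8$.
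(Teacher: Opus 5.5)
Your proposal is correct and is essentially the paper's argument: the paper only remarks that the proof of \zcref{thm:cdc} yields the result, because in the cubic case it already produces eight Eulerian subgraphs $H_s$, $s\in\mathbb{F}_2^3$. Your index-by-index bookkeeping and your parity argument for lifting across a $2$-edge-cut supply the details the paper leaves implicit, namely that the reduction of \zcref{prop:min} preserves the bound of eight.
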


Since the Petersen graph is not $3$-edge-colorable,
not all graphs have a $4$-cycle double cover by the following theorem.

\begin{theorem}[Jaeger~\cite{Jaeger1985a}]
    The following are equivalent for loopless cubic graphs $G$.
    \begin{enumerate}[label=\rm(\roman*)]
        \item $G$ is $3$-edge-colorable.
        \item $G$ has a $3$-cycle double cover.
        \item $G$ has a $4$-cycle double cover.
    \end{enumerate}
\end{theorem}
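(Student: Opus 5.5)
The plan is to prove the cycle of implications (i)$\Rightarrow$(ii)$\Rightarrow$(iii)$\Rightarrow$(i). As in \zcref{lem:two-element-edge-labels}, we encode a cycle double cover by the pair of indices of the members containing each edge. Throughout we use the Eulerian-subgraph form of a $k$-cycle double cover, and we allow some members to be empty.

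For (i)$\Rightarrow$(ii), take a proper $3$-edge-colouring with colour classes $M_1,M_2,M_3$. Since $G$ is cubic, each $M_i$ is a perfect matching. Hence each vertex has degree exactly $2$ in $M_i\cup M_j$ for $i\ne j$, so $M_1\cup M_2$, $M_1\cup M_3$ and $M_2\cup M_3$ are Eulerian. An edge of colour $i$ lies in exactly the two unions that contain $M_i$, so these three subgraphs form a $3$-cycle double cover. The implication (ii)$\Rightarrow$(iii) is immediate, since a collection of at most $3$ Eulerian subgraphs is also a collection of at most $4$.

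The substantive step is (iii)$\Rightarrow$(i). Let $H_1,\dots,H_4$ be a $4$-cycle double cover, padded with empty subgraphs if fewer than four are given. For each edge $e$, let $P_e\subseteq\{1,2,3,4\}$ be the two-element set of indices $i$ with $e\in E(H_i)$. The six $2$-subsets of $\{1,2,3,4\}$ split into three complementary pairs: $\{12,34\}$, $\{13,24\}$ and $\{14,23\}$. Colour $e$ by the complementary pair containing $P_e$; this uses three colours.

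It remains to check that the colouring is proper. Fix a vertex $v$ with incident edges $e,f,g$; these are three distinct edges because $G$ is loopless and cubic. Since each $H_i$ is Eulerian, each index $i$ occurs in an even number of the sets $P_e,P_f,P_g$, so it occurs $0$ or $2$ times. The total number of occurrences is $6$, so exactly three indices occur twice and one, say $4$, never occurs. Then $P_e,P_f,P_g$ are $2$-subsets of $\{1,2,3\}$ in which each element occurs exactly twice. They must be pairwise distinct: if two of them were equal, say both $\{1,2\}$, then the third would have to contain $3$ twice, which is impossible. So $\{P_e,P_f,P_g\}=\{12,13,23\}$, and these three sets lie in the three different complementary pairs. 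Hence $e,f,g$ receive distinct colours.

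The only delicate point is this parity count at a vertex. It is where looplessness is needed, so that a vertex's degree in $H_i$ is simply the number of incident edges of $H_i$, and where the complementary pairing of the $2$-subsets is essential.
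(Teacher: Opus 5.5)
Your proof is correct, but you run the cycle of implications differently from the paper, and your key step is done in one stage rather than two.

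The paper proves (i)$\Rightarrow$(iii) with the same matching construction you use. It writes $M_i\triangle M_j$, which equals $M_i\cup M_j$ because the colour classes are disjoint. It then splits the nontrivial direction into two steps:
\begin{itemize}
\item (iii)$\Rightarrow$(ii): replace $H_1,\dots,H_4$ by $H_1\triangle H_2$, $H_1\triangle H_3$, $H_1\triangle H_4$.
\item (ii)$\Rightarrow$(i): colour each edge by the index of the unique member omitting it. This works because at each vertex every member of a $3$-cycle double cover has degree exactly $2$.
\end{itemize}

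Your colouring by complementary pairs of $2$-subsets is exactly the composite of these two steps. An edge with $P_e=\{1,c\}$ or $P_e=\{1,2,3,4\}\setminus\{1,c\}$ is omitted precisely by $H_1\triangle H_c$, so both routes produce the same edge colouring.

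What the paper's factorization buys:
\begin{itemize}
\item An independently useful fact: any $4$-cycle double cover can be turned into a $3$-cycle double cover by an $\mathbb{F}_2$-linear operation.
\item A simpler vertex count: three members, each of even degree at most $3$, with degrees summing to $6$, so each has degree $2$.
\end{itemize}

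What your version buys: you avoid the intermediate object, and (ii)$\Rightarrow$(iii) becomes trivial. The price is the slightly longer parity analysis with four indices. You must show that at each vertex one index is absent and the three sets are the three $2$-subsets of the remaining $3$-set. You then observe that each complementary pair contains exactly one set avoiding the absent index, which is why the three colours at a vertex are distinct.
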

\begin{proof}
    (iii)$\to$(ii): If $G$ has a $4$-cycle double cover $C_1$, $C_2$, $C_3$, $C_4$, 
    then 
    $C_1\triangle C_2$, $C_1\triangle C_3$, $C_1\triangle C_4$ is a $3$-cycle double cover.

    (ii)$\to$(i): Let $C_1,C_2,C_3$ be a $3$-cycle double cover. Color each edge by
    the unique index $i\in\{1,2,3\}$ for which $e\notin C_i$. At every vertex, each
    $C_i$ has degree two: its degree is even and the sum of these three degrees is
    six. Thus each $C_i$ omits exactly one of the three incident edges. Since every
    edge is omitted by exactly one $C_i$, the three incident edges receive distinct
    colors. Hence this is a proper $3$-edge-coloring.

    (i)$\to$(iii): Let $M_1,M_2,M_3$ be the three color classes of a proper
    $3$-edge-coloring. They are perfect matchings forming a partition of $E(G)$.
    The Eulerian subgraphs
    \[
        M_1\triangle M_2,\qquad M_2\triangle M_3,\qquad M_1\triangle M_3
    \]
    form a $3$-cycle double cover: an edge in $M_i$ belongs to precisely the two
    symmetric differences containing $M_i$. This is also a $4$-cycle double cover.
\end{proof}

There is a conjecture that one can reduce $8$ to $5$. 

\begin{conjecture}[Celmins~\cite{Celmins1985}, Preissmann~\cite{Preissmann1981}]
\label{conj:5-cdc}
    Every bridgeless graph has a $5$-cycle double cover.
\end{conjecture}
\subsection{Orientable cycle double cover conjecture}
An \emph{orientable cycle double cover} of a graph~$G$ is a list of Eulerian subgraphs, each equipped with a directed Eulerian orientation,
such that 
every edge of $G$ is used exactly once in each direction.
An \emph{orientable $k$-cycle double cover} of a graph~$G$ is 
an orientable cycle double cover with at most $k$ Eulerian subgraphs.

\begin{conjecture}[Archdeacon~\cite{Archdeacon1984} and Jaeger~\cite{Jaeger1988}]
    Every bridgeless graph admits an orientable $5$-cycle double cover.
\end{conjecture}
Actually, this conjecture would imply the $5$-flow conjecture of Tutte.
\begin{lemma}[Lemma 13.1.5 in \cite{Zhang2012}]
    If a graph $G$ admits an orientable $k$-cycle double cover,
    then it has a nowhere-zero $\mathbb{Z}_k$-flow.
\end{lemma}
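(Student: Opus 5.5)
Given an orientable $k$-cycle double cover $C_1,\ldots,C_k$ of $G$, where each $C_i$ carries a directed Eulerian orientation, I will produce a nowhere-zero $\mathbb{Z}_k$-flow directly. Fix an arbitrary reference orientation $\vec G$ of $G$. Each directed Eulerian subgraph $\vec C_i$ defines an integer flow $\psi_i:E(\vec G)\to\{-1,0,1\}$. Set $\psi_i(e)=1$ if $e\in C_i$ and its direction in $\vec C_i$ agrees with $\vec G$, set $\psi_i(e)=-1$ if $e\in C_i$ with the opposite direction, and set $\psi_i(e)=0$ if $e\notin C_i$. Since $\vec C_i$ is a directed Eulerian orientation, in-degree equals out-degree at every vertex, so $\psi_i$ satisfies conservation at every vertex. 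Loops contribute to both sides equally and cause no trouble.

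Next I take the weighted combination
\[
  \phi=\sum_{i=1}^{k} i\,\psi_i \pmod k,
\]
which is again a $\mathbb{Z}_k$-flow, because conservation is linear. It remains to check that $\phi(e)\neq0$ for every edge. By definition of an orientable cycle double cover, each edge $e$ lies in exactly two of the subgraphs, say $C_i$ and $C_j$ with $i\ne j$, and it is traversed in opposite directions in them. Hence $\psi_i(e)=-\psi_j(e)=\pm1$, and $\phi(e)=\pm(i-j)$. Since $1\le i,j\le k$ and $i\neq j$, we have $0<\abs{i-j}<k$, so $\phi(e)\not\equiv0\pmod k$.

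One degenerate point needs care: the list might contain the same edge in two entries with $i=j$. This cannot happen, because the two uses of $e$ come from two distinct members of the list. If the cover has fewer than $k$ members, the same argument works with weights $1,\ldots,m$ for $m\le k$. I expect no real obstacle. The only thing to state carefully is that "used exactly once in each direction" forces the two containing subgraphs to be distinct and oppositely oriented on $e$, which is what makes the weights $i$ distinct modulo $k$ suffice.
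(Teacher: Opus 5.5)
Your proof is correct and is essentially the paper's argument. You take the signed characteristic functions of the directed Eulerian subgraphs with respect to a fixed reference orientation, form $\sum_i i\,\chi_i \pmod k$, and get nonvanishing from $0<\abs{i-j}<k$. The only difference is cosmetic: the paper pads the list with empty subgraphs to exactly $k$ members, while you use weights $1,\ldots,m$ with $m\le k$.
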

\begin{proof}
    Pad the list with empty directed Eulerian subgraphs, and write it as
    $C_1,\ldots,C_k$. Fix an arbitrary orientation of $G$. For each $i$, let
    $\chi_i$ be the signed characteristic function of $C_i$ with respect to this
    orientation. Since $C_i$ is directed Eulerian, $\chi_i$ is an integer flow.
    Therefore
    \[
        \phi=\sum_{i=1}^k i\chi_i\pmod{k}
    \]
    is a $\mathbb{Z}_k$-flow. For each edge $e$, there are $i\ne j$ such that $C_i$
    and $C_j$ use $e$ in opposite directions. Hence $\phi(e)=\pm(i-j)\ne0$ in
    $\mathbb{Z}_k$, so $\phi$ is nowhere-zero.
\end{proof}
\subsection{Berge-Fulkerson conjecture}
An \emph{$m$-cycle $k$-cover} is a list of $m$ Eulerian subgraphs such that every edge belongs to exactly $k$ of them. 

\begin{theorem}[Bermond, Jackson, and Jaeger~\cite{BJJ1983}]
    Every bridgeless graph has a $7$-cycle $4$-cover.
\end{theorem}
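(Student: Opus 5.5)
The plan is to derive the statement directly from Jaeger's $8$-flow theorem (\zcref{thm:jaeger-8flow}) by taking the $7$ nonzero linear functionals on $\mathbb{F}_2^3$. The underlying fact is that a nonzero vector of $\mathbb{F}_2^3$ has dot product $1$ with exactly $4$ of the $7$ nonzero vectors. This plays the role that \zcref{lem:two-element-edge-labels} played for double covers, with each edge now labelled by a four-element set instead of a two-element set.

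\emph{Reduction to a nowhere-zero flow on the whole graph.} Let $G$ be bridgeless. Every component of $G$ with at least two vertices is $2$-edge-connected, so \zcref{thm:jaeger-8flow} gives a nowhere-zero $\mathbb{Z}_2^3$-flow on it. On a loop, any nonzero value can be assigned, since a loop contributes its value twice at its vertex and so cancels in characteristic $2$. A one-vertex component therefore gets a flow as well. Putting these together gives $\phi:E(G)\to\mathbb{F}_2^3\setminus\{0\}$ with $\sum_{e\in\delta(v)}\phi(e)=0$ at every vertex $v$, where loops are counted twice in $\delta(v)$. If one prefers, \zcref{lem:jaeger-flow} together with \zcref{prop:min}-type reductions could replace \zcref{thm:jaeger-8flow}, but the full theorem is already stated and is the cleanest route.

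\emph{The seven subgraphs.} For each nonzero $a\in\mathbb{F}_2^3$, put
\[
    H_a=\{e\in E(G): a\mathbin{\cdot}\phi(e)=1\}.
\]
The map $e\mapsto a\mathbin{\cdot}\phi(e)$ is a $\mathbb{Z}_2$-flow, because the dot product is linear and $\phi$ sums to zero at each vertex. Hence $\abs{H_a\cap\delta(v)}$ is even for every $v$, and each $H_a$ is Eulerian. This gives exactly $7$ Eulerian subgraphs, some possibly empty, which is allowed in a list.

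\emph{Counting.} Fix an edge $e$ and set $x=\phi(e)\ne0$. The functional $a\mapsto a\mathbin{\cdot}x$ is a nonzero linear map $\mathbb{F}_2^3\to\mathbb{F}_2$. Its kernel therefore has $4$ elements, including $a=0$, so exactly $8-4=4$ vectors $a$ satisfy $a\mathbin{\cdot}x=1$, and all of them are nonzero. Thus $e$ lies in exactly $4$ of the $H_a$, and $(H_a)_{a\ne0}$ is a $7$-cycle $4$-cover.

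There is no real obstacle once \zcref{thm:jaeger-8flow} is available. The only points needing care are the bookkeeping for loops and disconnected graphs in the first step, and checking that the count $4$ comes out exactly rather than as "at least".
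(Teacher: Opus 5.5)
Your proof is correct and is essentially the paper's argument. Your $H_a$ is exactly the symmetric difference of the coordinate supports $C_i$ with $a_i=1$, so your seven subgraphs are the paper's seven nonzero combinations of $C_1,C_2,C_3$, and your count of four comes from the same fact about nonzero vectors of $\mathbb{F}_2^3$; your treatment of loops, giving each an arbitrary nonzero flow value, is a harmless variant of the paper's reduction and automatically puts each loop in exactly four of the subgraphs.
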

\begin{proof}
    It suffices to prove the claim for each connected component of $G$. Thus, after
    covering any loops twice, we may assume that $G$ is $2$-edge-connected. By
    \zcref{thm:jaeger-8flow}, let $C_1,C_2,C_3$ be the supports of the three
    coordinates of a nowhere-zero $\mathbb{F}_2^3$-flow. Every edge belongs to at
    least one of these Eulerian subgraphs. The seven nonzero linear combinations
    \[
        C_1,\ C_2,\ C_3,\ C_1\triangle C_2,\ C_2\triangle C_3,\
        C_1\triangle C_3,\ C_1\triangle C_2\triangle C_3
    \]
    form a $7$-cycle $4$-cover, because every nonzero vector of $\mathbb{F}_2^3$ has
    dot product $1$ with exactly four of the seven nonzero vectors of
    $\mathbb{F}_2^3$.
\end{proof}
By using the $6$-flow theorem of Seymour~\cite{Seymour1981}, 
Fan~\cite{Fan1992} showed the following.
\begin{theorem}[Fan~\cite{Fan1992}]
    Every bridgeless graph admits a $10$-cycle $6$-cover.
\end{theorem}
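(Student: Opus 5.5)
The plan is to combine a nowhere-zero $\mathbb{Z}_6=\mathbb{Z}_2\times\mathbb{Z}_3$-flow with the parity bookkeeping used in the proof of the Bermond--Jackson--Jaeger theorem above. Reduce first to a $2$-edge-connected graph $G$, exactly as in that proof: treat each component separately and cover each loop by the same loop subgraph six times. Seymour's $6$-flow theorem gives a nowhere-zero flow $(a,b)$ with $a$ a $\mathbb{Z}_2$-flow and $b$ a $\mathbb{Z}_3$-flow. Let $C$ be the support of $a$, which is Eulerian, and let $D$ be the support of $b$. Since $(a,b)$ is nowhere zero, $C\cup D=E(G)$.

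The subgraph $D$ carries a nowhere-zero $\mathbb{Z}_3$-flow. By Tutte's equivalence it has a nowhere-zero $3$-flow, hence a nowhere-zero $4$-flow, hence a nowhere-zero $\mathbb{Z}_2^2$-flow. Let $D_1,D_2$ be the supports of its two coordinates; they are Eulerian and $D_1\cup D_2=D$. Put $A_1=D_1$, $A_2=D_2$ and $A_3=D_1\triangle D_2$. Then $A_1,A_2,A_3$ is a $3$-cycle $2$-cover of $D$: every edge of $D$ lies in exactly two of them and no edge outside $D$ lies in any.

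The $10$ Eulerian subgraphs will be taken, with multiplicities, among the symmetric differences $\alpha C\triangle\beta_1A_1\triangle\beta_2A_2$ with $(\alpha,\beta_1,\beta_2)\ne0$. Each edge $e$ gets a type vector $(\mathbf 1_{e\in C},\mathbf 1_{e\in A_1},\mathbf 1_{e\in A_2})\in\mathbb F_2^3$. The number of chosen subgraphs containing $e$ is the weighted number of chosen functionals having dot product $1$ with this type vector. The requirement becomes a small integer linear system: nonnegative integer weights on the seven nonzero functionals, summing to $10$, such that every type vector actually occurring is hit with total weight exactly $6$. A counting check shows why this is not automatic. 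Each functional hits four of the seven nonzero vectors, so weight $10$ produces $40$ hits in total, whereas requiring all seven types to be hit six times needs $42$. Hence it is essential that some type vector does not occur. When one type is absent, the remaining six equations in seven unknowns should admit a nonnegative integral solution, which I will exhibit explicitly.

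The main obstacle is guaranteeing that one type is absent. My first attempt will use the freedom in choosing $C$. Adding to $a$ any Eulerian subgraph contained in $D$, namely $D_1$, $D_2$ or $D$, keeps $(a,b)$ a nowhere-zero $\mathbb{Z}_6$-flow, since the $\mathbb{Z}_3$-part is nonzero there. This flips the $C$-coordinate on the corresponding edges. I also have freedom in the $\mathbb{Z}_3\to\mathbb{Z}_2^2$ conversion, for instance orienting $D$ so that $b$ takes only the value $1$ and choosing which edges receive flow value $3$ in the $4$-flow. I will use these choices to force, for example, $C\cap D_1\cap D_2=\emptyset$, which kills the type $(1,1,1)$. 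If that fails, I will fall back on the stronger form of Seymour's theorem, in which $C$ is chosen so that $G/C$ has a nowhere-zero $3$-flow, and build $D$ inside $E(G)\setminus C$ as far as possible, so that $C\cap D$ is confined to a single type.
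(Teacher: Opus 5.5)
\textbf{There is a genuine gap, and it cannot be closed within your framework.} The paper gives no proof of this theorem; it only cites Fan and remarks that Seymour's $6$-flow theorem is used. Your plan fails at the step you call the main obstacle: making one of the seven types absent. This is impossible for every graph without a nowhere-zero $4$-flow, for example the Petersen graph, so no cleverer choice can rescue it.

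Whatever Eulerian subgraphs $C,D_1,D_2$ with $C\cup D_1\cup D_2=E(G)$ you end up with, the map $e\mapsto(\mathbf{1}_{e\in C},\mathbf{1}_{e\in D_1},\mathbf{1}_{e\in D_2})$ is a nowhere-zero $\mathbb{F}_2^3$-flow. Suppose a nonzero vector $x^*$ never occurs as a type. Then every type lies outside $\langle x^*\rangle$, so for each edge exactly two of the three nonzero functionals orthogonal to $x^*$ evaluate to $1$ on its type. These three functionals therefore already give a $3$-cycle double cover of $G$; equivalently, projecting along $x^*$ gives a nowhere-zero $\mathbb{F}_2^2$-flow. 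For a loopless cubic graph this forces $3$-edge-colorability, by Jaeger's theorem in the final section, and the Petersen graph is not $3$-edge-colorable.

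Hence, for the Petersen graph, all seven types occur for every admissible choice of $C,D_1,D_2$. This includes whatever your fallback via the strong form of Seymour's theorem produces, since it stays in the same framework. Your own count ($40<42$) then shows that no ten members of the span of $C,D_1,D_2$, empty members included, form a $10$-cycle $6$-cover. Conversely, whenever a type is missing, the graph already has a $3$-cycle double cover and the theorem is trivial: take it three times and add one empty subgraph. So the scheme can only succeed on graphs that have a nowhere-zero $4$-flow, where Seymour's theorem is not needed.

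\textbf{The proposed levers cannot help either.} Replacing $a$ by $a+\mathbf{1}_{D_1}$, $a+\mathbf{1}_{D_2}$ or $a+\mathbf{1}_{D_1\triangle D_2}$ replaces $C$ by $C\triangle D_1$, $C\triangle D_2$ or $C\triangle D_1\triangle D_2$. Each of these is an invertible linear change of coordinates on $\mathbb{F}_2^3$, so it merely permutes the types and can never remove one. Also, $D$ itself need not be Eulerian, so adding $\mathbf{1}_D$ to $a$ is not allowed.

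\textbf{What is actually missing.} For the Petersen graph, the members of any $10$-cycle $6$-cover must span a space of dimension at least $4$. For example, the ten pairwise symmetric differences of a $5$-cycle double cover always form a $10$-cycle $6$-cover and span such a space. So the $\mathbb{Z}_3$-part of the $6$-flow has to be exploited in a way that a single nowhere-zero $\mathbb{F}_2^3$-flow cannot capture. That is where the real work of Fan's proof must lie, and nothing in your plan supplies it.
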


\begin{conjecture}[Berge and Fulkerson~\cite{Fulkerson1971}]
    Every bridgeless cubic graph admits $6$ perfect matchings such that 
    every edge is in exactly two of them.
\end{conjecture}
If we take the complement of each perfect matching, we deduce the following equivalent form of the conjecture of Berge and Fulkerson.
\begin{conjecture}[Berge and Fulkerson~\cite{Fulkerson1971}]
    Every bridgeless cubic graph admits a 
    $6$-cycle $4$-cover.
\end{conjecture}

\subsection{Matroids}

A matroid is \emph{binary} if it is representable over $\mathbb{F}_2$, and
it is \emph{regular} if it is representable over every field. A
\emph{circuit} of a matroid is a minimal dependent set. The dual
matroid~$M^*$ has as its bases the complements of the bases of~$M$, and a
\emph{cocircuit} of~$M$ is a circuit of~$M^*$. A \emph{cycle} of a binary
matroid is a disjoint union of circuits, with the empty set allowed. A
\emph{cocycle} is a cycle of the dual matroid, or equivalently a disjoint
union of cocircuits. A matroid is \emph{graphic} if it is the cycle matroid
of a graph. In the cycle matroid of a graph, the cycles are precisely the
edge sets of Eulerian subgraphs.

A \emph{$k$-cycle double cover} of a matroid is a family of at most
$k$ cycles such that every element belongs to exactly two members of the
family. A \emph{cycle double cover} is such a family with no prescribed
bound on its size.
Let $F_7^*$ denote the dual of the Fano matroid. Jamshy and
Tarsi~\cite{JT1989} proved that the cycle double cover conjecture is
equivalent to the apparently stronger statement below.
By \zcref{thm:cdc}, we can state it as a theorem.
\begin{theorem}[Jamshy and Tarsi~\cite{JT1989}]\label{thm:f7minor}
    Every binary matroid without coloops and with no $F_7^*$-minor has a
    cycle double cover.
\end{theorem}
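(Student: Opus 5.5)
The plan is to combine \zcref{thm:cdc} with Seymour's decomposition theorem for binary matroids with no $F_7^*$-minor. That theorem says such a matroid is built by $1$-, $2$- and $3$-sums from graphic matroids, cographic matroids, copies of $R_{10}$, and copies of $F_7$. I would argue by a minimum counterexample, following the pattern of \zcref{prop:min}. Let $M$ be a binary matroid without coloops and with no $F_7^*$-minor that has no cycle double cover, and choose it with $\abs{E(M)}$ minimum.

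The first step is to reduce to the $3$-connected case. A loop of $M$ can be deleted and then covered twice. A $1$-sum splits into components, which are handled separately. If $M$ is a $2$-sum $M_1\oplus_2 M_2$ along a basepoint $p$, then each $M_i$ is isomorphic to a minor of $M$. Hence each $M_i$ has no $F_7^*$-minor and no coloop, since a coloop of $M_i$ would give a coloop of $M$. By minimality both $M_i$ have cycle double covers. In each cover, $p$ lies in exactly two cycles. The cycles of $M$ are $C_1\triangle C_2$ with $C_i$ a cycle of $M_i$ and $p\in C_1\Leftrightarrow p\in C_2$. So we pair the two cycles containing $p$ on each side arbitrarily and keep the other cycles unchanged. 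This is the matroid analogue of the $2$-edge-cut step in \zcref{prop:min}. Thus $M$ is $3$-connected, and by Seymour's theorem $M$ is graphic, cographic, $R_{10}$, $F_7$, or a $3$-sum.

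Next, the basic pieces. If $M=M(G)$ is graphic, then $G$ is bridgeless because $M$ has no coloops, so \zcref{thm:cdc} applies. If $M=M^*(G)$ is cographic, its cycles are the edge cuts of $G$, and coloops of $M$ correspond to loops of $G$. So $G$ is loopless, and the vertex stars $\delta(v)$, $v\in V(G)$, are cuts covering every edge exactly twice. For $F_7$, the seven lines minus nothing: the complements of the seven lines are $4$-element circuits, each element lies in exactly four of them, and so one takes an explicit subfamily. Concretely, the three $4$-circuits avoiding a fixed line form a double cover together with that line. For $R_{10}$, whose elements all lie in $4$-circuits and which has a transitive automorphism group, I would write down an explicit double cover by $4$-circuits and check it by hand.

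The main obstacle is the $3$-sum case, $M=M_1\oplus_3 M_2$ along a triangle $T$. Each $M_i$ is again a minor of $M$ with no coloops, so it has a cycle double cover. To glue, however, we need the two covers to induce the same multiset of nonempty traces $C\cap T$, because cycles of $M$ are $C_1\triangle C_2$ with $C_1\cap T=C_2\cap T$. I would try first to normalize the traces using the fact that $T$ is itself a cycle of each $M_i$. Replacing a cycle $C$ by $C\triangle T$ changes its trace to its complement in $T$. One can also add the cycle $T$ twice, or split a cycle. The goal is to bring every cover into a canonical form, for example one where the trace multiset is $\{\{a,b\},\{b,c\},\{a,c\}\}$ for $T=\{a,b,c\}$. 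If that normalization fails, the fallback is to apply minimality to an enlarged matroid instead: for instance, $M_i$ with $T$ replaced by parallel classes or with an extra copy of $T$ glued in (still $F_7^*$-free and smaller than $M$). This would force the traces on $T$ to be of the canonical type, after which the pieces are combined as in the $2$-sum case.
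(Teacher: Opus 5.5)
The paper does not reprove this statement. It combines the Jamshy--Tarsi theorem (the graph version of the conjecture implies this matroid version) with \zcref{thm:cdc}. Reproving that reduction via Seymour's decomposition is a sensible plan, and your loop, $1$-sum, $2$-sum, graphic and cographic steps are correct. There is, however, a genuine gap exactly where you locate the difficulty: the $3$-sum case is never actually handled. Minimality hands you \emph{some} cycle double covers of $M_1$ and $M_2$, with no control over their traces on $T$, and your normalization moves cannot repair this in general. For example, suppose the cover of $M_1$ contains $T$ itself twice, so every other member avoids $T$. Flipping by $T$ only exchanges the traces $\emptyset$ and $T$, adding $T$ twice overcovers it, and the circuit $T$ cannot be split, so $\{\{a,b\},\{b,c\},\{a,c\}\}$ is unreachable. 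Your fallback (``parallel classes'', ``an extra copy of $T$'') is not specified. You give no reason it would force the canonical form, and you do not check that it is coloop-free, $F_7^*$-free and smaller than $M$.

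The missing idea is a $\Delta$--$Y$ exchange, which makes the canonical form compulsory. In $K_4$ let $T=\{a,b,c\}$ be a triangle and $T^*=\{a',b',c'\}$ the star at the fourth vertex. Let $M_1'$ be the binary matroid on $(E(M_1)\setminus T)\cup T^*$ whose cycles are the sets $C_1\triangle D$, where $C_1$ is a cycle of $M_1$, $D$ is a cycle of $M(K_4)$, and $C_1\cap T=D\cap T$.
\begin{itemize}
\item Every such cycle meets $T^*$ in $D\cap T^*$, which has even size. So any cycle double cover of $M_1'$ meets $T^*$ in $\{a',b'\}$, $\{b',c'\}$, $\{a',c'\}$, exactly once each.
\item These three pairs correspond bijectively to the trace classes $\{\{x\},T\setminus\{x\}\}$, $x\in T$. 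Choosing the representatives $\{b,c\},\{a,c\},\{a,b\}$, and $\emptyset$ for all other members, yields a cycle double cover of $M_1$ with canonical traces on $T$.
\item Doing the same for $M_2$, you glue as in the $2$-sum case.
\end{itemize}
The auxiliary matroid is admissible for the minimality argument:
\begin{itemize}
\item In the $3$-sum case $M_1$ is regular, hence so is $M_1'$ (a $\Delta$--$Y$ exchange preserves regularity), so $M_1'$ has no $F_7^*$-minor.
\item Since $T$ contains no cocircuit of $M_1$ (part of the definition of a $3$-sum), the traces of cycles of $M_1$ on $T$ form all of $\mathbb{F}_2^T$. This gives that $M_1'$ has no coloops.
\item $\abs{E(M_1')}=\abs{E(M_1)}<\abs{E(M)}$, because $\abs{E(M_2)}\ge7$.
\end{itemize}

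Separately, your explicit cover of $F_7$ is wrong. Only one $4$-circuit avoids a given line, and three $4$-sets plus a line have total size $15\ne14$. This is easily fixed, because the whole ground set is a cycle in both exceptional cases. $E(F_7)$ is a line plus its complementary $4$-circuit, and in $R_{10}$ the ten weight-$3$ vectors of $\mathbb{F}_2^5$ sum to zero. So the ground set taken twice is a cycle double cover in both cases.
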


Since every regular matroid is binary and has no $F_7^*$-minor, this
theorem implies that every regular matroid without coloops has a cycle
double cover.

By \zcref{thm:8-cycle-double-cover}, 
every graphic matroid without coloops has an $8$-cycle double cover.
Furthermore, \zcref{conj:5-cdc} has the following equivalent matroid form.
\begin{conjecture}\label{con:5cdc-matroid}
    Every graphic matroid without coloops has a $5$-cycle double cover.
\end{conjecture}

One may ask whether \zcref{con:5cdc-matroid} extends from graphic matroids
to regular matroids, in analogy with \zcref{thm:f7minor}. It does not; in
fact, no uniform bound is possible for regular matroids. Linial, Meshulam,
and Tarsi proved that the minimum number of cocycles in a double cocycle
cover of $K_n$ is $n$ for $n\geq 5$~\cite[Theorem~3.1(iii) and
Section~6]{LMT1988}. Since the cycles of $M^*(K_n)$ are the cocycles of
$M(K_n)$, the minimum size of a cycle double cover of $M^*(K_n)$ is also
$n$. Moreover, $M^*(K_n)$ is regular and has no coloops. Thus the
$5$-cycle double cover conjecture for graphic matroids has no analogue
with any fixed number of cycles for cographic matroids.

\section*{Acknowledgements}
The author thanks Adrian Bondy for bringing his conjecture on small cycle double covers to the author's attention and Vahan Mkrtchyan for several remarks that helped improve the presentation.

\bibliographystyle{amsplain}
\providecommand{\bysame}{\leavevmode\hbox to3em{\hrulefill}\thinspace}
\providecommand{\MR}{\relax\ifhmode\unskip\space\fi MR }
\providecommand{\MRhref}[2]{\href{http://www.ams.org/mathscinet-getitem?mr=#1}{#2}
}
\providecommand{\href}[2]{#2}

\end{document}